\newtheorem{thm}{Theorem}[section]
\newtheorem{lem}[thm]{Lemma}
\newtheorem{cor}[thm]{Corollary}
\newtheorem{rem}[thm]{Remark}
\theoremstyle{definition}
\theoremstyle{remark}
\numberwithin{equation}{section}
\newcommand{\Z}{\mathbf{Z}}
\newcommand{\Mod}[1]{\ (\textup{mod}\ #1)}
\providecommand{\sgn}{\operatorname{sgn}}
\providecommand{\arcosh}{\operatorname{arcosh}}
\providecommand{\sym}{\operatorname{sym}}
\providecommand{\Aut}{\operatorname{Aut}}
\providecommand{\tr}{\operatorname{tr}}
\begin{document}

\title[]{Convolution formula for the sums of generalized Dirichlet $L$-functions}

\author{Olga  Balkanova}
\address{Department of Mathematical Sciences, University of Gothenburg and Chalmers University of Technology, SE-412 96 G\"{o}teborg, Sweden}
\email{olgabalkanova@gmail.com}

\author{Dmitry  Frolenkov}
\address{
Khabarovsk Division of the Institute for Applied Mathematics, Far Eastern Branch, Russian Academy of Sciences and Steklov Mathematical Institute of Russian Academy of
Sciences, 8 Gubkina st., Moscow, 119991, Russia}
\email{frolenkov@mi.ras.ru}
\thanks{Research of Dmitry Frolenkov is supported by the Russian Science Foundation under grant [14-11-00335] and performed in Khabarovsk Division of the Institute for Applied Mathematics, Far Eastern Branch, Russian Academy of Sciences}
\begin{abstract}
Using the Kuznetsov trace formula, we prove a spectral decomposition for the sums of generalized Dirichlet $L$-functions. Among applications are an explicit formula relating norms of prime geodesics to moments of symmetric square $L$-functions and an asymptotic expansion for the average of central values of generalized Dirichlet $L$-functions.
\end{abstract}

\keywords{generalized Dirichlet L-functions; Prime Geodesic Theorem; Kuznetsov trace formula; generalized Kloosterman sums; symmetric square L-functions}
\subjclass[2010]{Primary: 11F12}

\maketitle

\tableofcontents


\section{Introduction}
The purpose of this paper is to advance understanding of the sums of the generalized Dirichlet $L$-functions
\begin{equation}\label{Lbyk}
\mathscr{L}_{n}(s)=\frac{\zeta(2s)}{\zeta(s)}\sum_{q=1}^{\infty}\frac{\rho_q(n)}{q^{s}},
\end{equation}
where $\zeta(s)$ is the Riemann zeta function and for $n \in \Z$
\begin{equation*}
\rho_q(n):=\#\{x\Mod{2q}:x^2\equiv n\Mod{4q}\}.
\end{equation*}
The function $\mathscr{L}_{n}(s)$, which is nonzero only if $n\equiv 0,1\pmod{4}$,  can be regarded as a generalization of both the Riemann zeta-function and the Dirichlet $L$-function $L(s,\chi_D)$ for primitive quadratic character $\chi_D$. If $n=0$ we have
\begin{equation*}
\mathscr{L}_{n}(s)=\zeta(2s-1).
\end{equation*}
If $D$ is a fundamental discriminant, then
\begin{equation*}
\mathscr{L}_{D}(s)=L(s,\chi_D).
\end{equation*}

The main result of the paper is the following convolution formula.
\begin{thm}\label{thm:convform} Let $\omega \in C^{\infty}$ be of compact support on $[a_1,a_2]$ for $0<a_1<a_2<\infty$. Assume that $\Re{s}>1$. Then
\begin{equation}\label{eq:spectrdecomp1}
\sum_{n=1}^{\infty}\omega(n)\mathscr{L}_{n^2-4l^2}(s)=MT(s)+Z_C(s)+Z_H(s)+ Z_D(s),
\end{equation}
where
\begin{equation}
MT(s)=
\sigma_{-s}(l^2)\frac{\zeta(2s)}{\zeta(1+s)}\int_0^{\infty}\omega(x)dx,
\end{equation}
\begin{equation}\label{eq:zc}
Z_C(s)=\frac{\zeta(s)}{\pi^{1/2+s}}\int_{-\infty}^{\infty}\frac{\sigma_{2ir}(l^2)}{l^{2ir}}
\frac{\zeta(s+2ir)\zeta(s-2ir)}{\left|\zeta(1+2ir\right)|^2}h(\omega;s;r)dr,
\end{equation}
\begin{equation}\label{eq:zh}
Z_H(s)=\pi^{1/2-s}\sum_{k\geq 6}g(\omega;s;k)\sum_{j\leq \vartheta(k)}\alpha_{j,k}t_{j,k}(l^2)L(\sym^2u_{j,k}, s),
\end{equation}
\begin{equation}\label{eq:zd}
Z_D(s)=\pi^{1/2-s}\sum_{j}\alpha_jt_j(l^2)L(\sym^2u_j, s)h(\omega;s;\kappa_j),
\end{equation}
where
\begin{equation}\label{eq:f2}
f(\omega;s;x)=\frac{2}{\pi^{1/2}}
\left(\frac{x}{4l}\right)^{s}
\int_{0}^{\infty}\omega(y)\cos\left(\frac{xy}{2l}\right)dy,
\end{equation}
\begin{equation}\label{eq:g}
g(\omega;s;k)=\pi(-1)^k\int_{0}^{\infty}J_{2k-1}(x)\frac{f(\omega;s;x)}{x}dx,
\end{equation}
\begin{equation}\label{eq:h}
h(\omega;s;r)=\pi\int_{0}^{\infty}k_0(x,ir)\frac{f(\omega;s;x)}{x}dx.
\end{equation}
\end{thm}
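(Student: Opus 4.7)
\emph{Proof plan.} The strategy is to unfold the Dirichlet series defining $\mathscr{L}_{n^2-4l^2}(s)$, to transform the innermost quantity $\rho_q(n^2-4l^2)$ into a generalized Kloosterman sum via Poisson summation combined with Gauss-sum evaluation, and then to apply the Kuznetsov trace formula to the resulting weighted sum of Kloosterman sums over moduli $q$. For $\Re s > 1$ the compact support of $\omega$ gives absolute convergence and permits the interchange
\[
\sum_{n=1}^{\infty}\omega(n)\mathscr{L}_{n^2-4l^2}(s)
\;=\;
\frac{\zeta(2s)}{\zeta(s)}\sum_{q=1}^{\infty}\frac{A_q}{q^s},\qquad
A_q:=\sum_{n}\omega(n)\rho_q(n^2-4l^2).
\]

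The first technical step is to rewrite $A_q$. Using $\rho_q(m)=\sum_{x\bmod 2q}\mathbf{1}_{x^2\equiv m\bmod 4q}$ and expanding the indicator through additive characters modulo $4q$, one can exchange the $n$-sum inside and apply Poisson summation to $\omega(n)e(-n^2 a/(4q))$, then evaluate the quadratic Gauss sum in $x$. The result expresses $A_q$ in terms of generalized Kloosterman sums at arguments involving $l^2$, with moduli dividing $4q$, weighted by a cosine transform of $\omega$; the factor $4l^2$ in $n^2 - 4l^2$ produces precisely the scaling $1/(2l)$ that appears in \eqref{eq:f2}. The degenerate (zero dual frequency) Poisson terms, combined with the prefactor $\zeta(2s)/\zeta(s)$ and summed in $q$, reassemble into $MT(s)$ by a Ramanujan-type divisor-sum identity that produces the arithmetic factor $\sigma_{-s}(l^2)\zeta(2s)/\zeta(1+s)$.

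The remaining off-diagonal contribution is a weighted sum of Kloosterman sums over $q$ whose test function is exactly $f(\omega;s;x)/x$. Applying the Kuznetsov trace formula converts this into the three spectral pieces of \eqref{eq:zc}--\eqref{eq:zd}: by construction, the Bessel transforms \eqref{eq:g} (holomorphic) and \eqref{eq:h} (Maass and Eisenstein) are exactly the transforms of $f(\omega;s;x)/x$ appearing on the spectral side of Kuznetsov. Converting the squared Fourier coefficients of the spectrum to the arithmetic normalizations $\alpha_j t_j(l^2)$ and then to $L(\sym^2 u_j,s)$ uses Hecke multiplicativity together with the Rankin--Selberg-type identity
\[
\sum_{q\ge 1}\frac{\lambda_j(q^2)}{q^{s}}=\frac{L(\sym^2 u_j,s)}{\zeta(2s)},
\]
and analogously for the holomorphic spectrum to produce \eqref{eq:zh}. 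The continuous-spectrum analog uses the Eisenstein Fourier coefficients $\sigma_{2ir}(n)/n^{ir}$ and a corresponding Ramanujan-type identity to produce the ratio $\zeta(s+2ir)\zeta(s-2ir)/|\zeta(1+2ir)|^2$ in \eqref{eq:zc}.

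\emph{Main obstacle.} The principal difficulty is the first step: extracting from $A_q$ a Kloosterman-sum expression whose Kuznetsov test function is precisely $f(\omega;s;x)/x$ as in \eqref{eq:f2}. The bookkeeping is nontrivial because of the $4q$-versus-$q$ modulus asymmetry, the parity structure forcing $n^2-4l^2\equiv 0$ or $1\pmod 4$ implicit in the definition of $\mathscr{L}$, the careful evaluation of quadratic Gauss sums at arbitrary denominators, and the need to maintain uniform absolute convergence under successive exchanges of summation. Once this clean identification is in place, the spectral unfolding via Kuznetsov and the passage to symmetric-square $L$-values by Hecke multiplicativity and Rankin--Selberg convolution proceed along well-established lines.
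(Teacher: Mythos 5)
Your overall architecture---reduce the sum to generalized Kloosterman sums, apply the Kuznetsov trace formula, then use Hecke multiplicativity and a Rankin--Selberg identity to reach the symmetric-square $L$-values---matches the paper's, and the spectral endgame you describe is essentially right. But the step you yourself flag as the ``main obstacle'' is precisely the step that is missing, and the mechanism you propose for it (additive characters modulo $4q$, Poisson summation against the quadratic phase $e(-an^2/(4q))$, evaluation of quadratic Gauss sums) is essentially the route behind Soundararajan--Young's formula \eqref{eq:SY}, whose convergence problems the paper explicitly cites as the reason a different treatment is needed; it is far from clear that it delivers the clean test function $f(\omega;s;x)$ together with the decay hypotheses required by the trace formula. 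The missing idea is that the $n$-dependence can be made \emph{linear}: the substitution $x=2y+n$ gives a bijection between roots of $x^2\equiv n^2-4l^2\pmod{4q}$ with $x$ modulo $2q$ and roots of $y^2+ny+l^2\equiv 0\pmod{q}$ with $y$ modulo $q$, whence
\begin{equation*}
\rho_q(n^2-4l^2)=\sum_{\substack{x,y\Mod{q}\\ x+y\equiv -n\Mod{q}}}\delta_q(xy-l^2).
\end{equation*}
After taking the Mellin transform of $\omega$, the $n$-sum over the residue class $-x-y\Mod{q}$ is a Lerch zeta function $\zeta(-(x+y)/q,0;\alpha)$ (Lemma \ref{lem:zetafunctL}); its pole at $\alpha=1$ produces $MT(s)$ via Lemma \ref{lem:mterm}, and its functional equation \eqref{eq:felerch}---Poisson summation for a linear phase---produces the sums $S(l^2,n,n;q)$ with the weight $f$ in the Mellin--Barnes form \eqref{eq:f}, from which \eqref{eq:f2} and the admissibility of the Kuznetsov test function are read off. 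No quadratic Gauss sums and no modulus-$4q$ bookkeeping enter at all.

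Two further points. First, the standard Kuznetsov formula applies to $S(m,n;q)$, not to $S(l^2,n,n;q)$; the paper first reduces the latter to the former via Selberg's identity \eqref{eq:BKV} inside Lemma \ref{Kuznetsov}, and this reduction is where the factor $t_j(l^2)$ in $Z_D$ and $Z_H$ comes from---your plan never addresses this. Second, the Kuznetsov test function is $f(\omega;s;\cdot)$ itself (the $1/x$ lives inside the definitions of the transforms $g$ and $h$), and the identity actually used for the remaining $n$-sum is $\sum_n t_j(n)^2 n^{-s}=\zeta(s)L(\sym^2 u_j,s)/\zeta(2s)$, whose factor $\zeta(s)/\zeta(2s)$ cancels the prefactor $\zeta(2s)/\zeta(s)$; the identity you display is just the definition of $L(\sym^2 u_j,s)$ and is not by itself what closes the computation. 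Finally, note that the paper proves the formula for $\Re s>3/2$, where all interchanges converge absolutely, and only then extends to $\Re s>1$ by analytic continuation; your claim that $\Re s>1$ alone suffices for the unfolding would need justification.
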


Furthermore, we prove the analytic continuation of \eqref{eq:spectrdecomp1} to regions containing the special points $s=1/2$ and $s=1$.
This is of particular importance since special values of $\mathscr{L}_{n}(s)$ arise naturally in various contexts, from the theory of modular forms to the Prime Geodesic Theorem.

First,  the function $\mathscr{L}_{n}(s)$ appears in the Fourier-Whittaker expansion of the combination of Maass-Eisenstein series of half-integral weight and level $4$ at cusps $0$ and $\infty$. See \cite{C,GH, PRR}.

Second, $\mathscr{L}_{n}(s)$ was studied by Zagier \cite{Z} in relation to the zeta function
\begin{equation*}
\zeta(s,\delta)=\sum_{\substack{\phi\pmod{\Gamma}\\ |\phi|=\delta}}\sum_{\substack{(m,n)\in \Z^2/\Aut(\phi)\\\phi(m,n)>0}}\frac{1}{\phi(m,n)^s},
\end{equation*}
where for  the full modular group $\Gamma$ the outer sum is over all $\Gamma$-equivalence classes of forms $\phi$ of discriminant $\delta$ and
the inner sum is over equivalence classes of pairs of integers modulo the group of automorphs of the form $\phi$.
In particular, Zagier constructed a modular form whose Fourier coefficients are infinite linear combinations of the zeta functions $\zeta(s,\delta)$ and proved that the Petersson inner product of the resulting modular form with an arbitrary primitive cusp form is the corresponding Rankin-Selberg zeta function. Furthermore, using the theory of binary quadratic forms, Zagier showed that
\begin{equation*}\zeta(s,\delta)=\zeta(s)\mathscr{L}_{\delta}(s).
\end{equation*}
 Finally, Zagier proved the analytic continuation and the functional equation
\begin{equation*}
\mathscr{L}_{n}^{*}(s)=\mathscr{L}_{n}^{*}(1-s)
\end{equation*}
of the completed $L$-function
\begin{equation*}
\mathscr{L}_{n}^{*}(s):=(\pi/|n|)^{-s/2}\Gamma(s/2+1/4-\sgn{n}/4)\mathscr{L}_{n}(s).
\end{equation*}

In the subsequent paper \cite{K},  Kuznetsov investigated the relation between $\mathscr{L}_{n}(s)$  and the Selberg trace formula. More precisely, the main result of \cite{K} is a new version of the trace formula, where a series involving $\mathscr{L}_{n^2-4}(1)$ replaces the sum over primitive hyperbolic classes of the modular group. Most importantly, Kuznetsov proved that
\begin{equation}\label{eq:Kuz}
\Psi_{\Gamma}(x)=2\sum_{n\leq X}\sqrt{n^2-4}\mathscr{L}_{n^2-4}(1), \quad X=x^{1/2}+x^{-1/2},
\end{equation}
where $\Psi_{\Gamma}(x)=\sum_{NP\leq x}\Lambda(P)$, $NP$ is the norm of the hyperbolic conjugacy class $\{P\}$ and $\Lambda(P)=\log{NP_0}$ if $\{P\}$ is a power of the primitive hyperbolic class $\{P_0\}$.

Equation  \eqref{eq:Kuz} turns out to be of crucial importance in the study of the Prime Geodesic Theorem.
Using   \eqref{eq:Kuz}, Bykovskii \cite{B} proved the Prime Geodesic Theorem in short intervals.
Moreover, the proof of Soundararajan and Young \cite{SY} of the best known (up to date) error term in the classical Prime Geodesic Theorem is based on \eqref{eq:Kuz}.

In addition, the special value  $\mathscr{L}_{n^2-4}(1)$ serves as a connecting link between prime geodesics and Kloosterman sums. Namely,
\cite[Theorem~1.3]{SY} states that for $\omega$ smooth, even, compactly supported function such that $\omega(x)=0$ if $|x|\leq 2$ we have
\begin{multline}\label{eq:SY}
\sum_{\{P\}}\Lambda(P)\frac{\omega(\tr(P))}{\sqrt{\tr(P)^2-4}}=\sum_{n=3}^{\infty}2\omega(n)\mathscr{L}_{n^2-4}(1)\\=\zeta(2)\sum_{q=1}^{\infty}\frac{1}{q^2}\sum_{l=-\infty}^{+\infty}S(l^2,1;q)\breve{\omega}\left( \frac{l}{q}\right),
\end{multline}
where the sum $\sum_{\{P\}}$ is over all hyperbolic conjugacy classes, $\tr(P)$ is the trace of conjugacy class $\{P\}$ and $\breve{\omega}(y)$ is the Fourier transform of $\omega(x)$. Note that the convergence of the sums over $q$ and $l$ in equation \eqref{eq:SY} is not obvious. For this reason, as indicated by the authors  of \cite{SY}, the formula \eqref{eq:SY} may not be useful for applications. This difficulty can be overcome by further investigation of the corresponding sums of Kloosterman sums, which is one of the results of the present paper.
\begin{thm}\label{thm:at1}  Let $\omega(x)$ be as in Theorem \ref{thm:convform} with $a_1>2$. Then
\begin{multline*}
\frac{1}{2}\sum_{\{P\}}\Lambda(P)\frac{\omega(\tr(P))}{\sqrt{\tr(P)^2-4}}=\sum_{n=3}^{\infty}\omega(n)\mathscr{L}_{n^2-4}(1)\\
=\widetilde{\omega}(1)+Z_C(1)+Z_H(1)+ Z_D(1),
\end{multline*}
where $Z_H(s)$, $Z_D(s)$ are defined by \eqref{eq:zh}, \eqref{eq:zd} and
\begin{multline*}
Z_C(1)=-\frac{1}{2\sqrt{\pi}}h(\omega;1;0)
+\frac{1}{\pi^{3/2}}\int_{-\infty}^{+\infty}\frac{\partial}{\partial s}h\left( \omega;s;r\right)\biggr|_{1}dr
\\+\frac{1}{\pi^{3/2}}\int_{-\infty}^{+\infty}\biggl(
\frac{\zeta'(1+2ir)}{\zeta(1+2ir)} +\frac{\zeta'(1-2ir)}{\zeta(1-2ir)}\biggr)h(\omega;1;r)dr.
\end{multline*}
\end{thm}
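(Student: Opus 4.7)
The first equality follows immediately from \eqref{eq:SY}: under the assumption $a_{1}>2$ we have $\omega(n)=0$ for $n\leq 2$, so the sum on the right begins at $n=3$, and dividing \eqref{eq:SY} by two yields the stated identity.

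For the second equality, the plan is to specialize Theorem~\ref{thm:convform} to $l=1$ (valid for $\Re s>1$) and pass to $s=1$ by analytic continuation. The left-hand side is analytic at $s=1$: for every $n\geq 3$ the integer $n^{2}-4$ is not a perfect square (the equation $(n-m)(n+m)=4$ has no solution with $n\geq 3$), so each $\mathscr{L}_{n^{2}-4}(s)$ is entire. The main term specializes to $MT(s)=\zeta(2s)\widetilde{\omega}(1)/\zeta(1+s)$, regular at $s=1$ with $MT(1)=\widetilde{\omega}(1)$. The holomorphic and Maass contributions $Z_{H}(s)$ and $Z_{D}(s)$ are regular at $s=1$ because $L(\sym^{2}u,s)$ is entire; absolute convergence of the spectral sums at $s=1$ follows from the super-polynomial decay of $g(\omega;s;k)$ and $h(\omega;s;\kappa_{j})$ in the spectral parameter (a consequence of $\omega\in C_{c}^{\infty}$) combined with standard growth bounds for $L(\sym^{2}u,1)$.

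The substantive work is the analytic continuation of the Eisenstein piece $Z_{C}(s)$, which has apparent singularities both from the outer $\zeta(s)$ and from the merging of the poles of $\zeta(s\pm 2ir)$ at $r=0$ as $s\to 1$. Setting $\zeta^{*}(u):=(u-1)\zeta(u)$, the key factorization is
\[
\frac{\zeta(s+2ir)\zeta(s-2ir)}{|\zeta(1+2ir)|^{2}}=\frac{4r^{2}}{(s-1)^{2}+4r^{2}}\,\Phi(s,r),\qquad \Phi(s,r):=\frac{\zeta^{*}(s+2ir)\zeta^{*}(s-2ir)}{\zeta^{*}(1+2ir)\zeta^{*}(1-2ir)},
\]
in which $\Phi$ is jointly analytic at $(s,r)=(1,0)$ and satisfies $\Phi(1,\cdot)\equiv 1$. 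Writing $4r^{2}/((s-1)^{2}+4r^{2})=1-(s-1)^{2}/((s-1)^{2}+4r^{2})$ splits $Z_{C}(s)$ into two pieces. The Poisson-kernel piece is handled by $(s-1)\int_{-\infty}^{\infty}\psi(r)/((s-1)^{2}+4r^{2})\,dr\to \pi\psi(0)/2$ as $s\to 1^{+}$, applied with $\psi(r)=\Phi(s,r)h(\omega;s;r)$; combined with $\zeta(s)(s-1)/\pi^{1/2+s}\to \pi^{-3/2}$ this contributes exactly $-h(\omega;1;0)/(2\sqrt{\pi})$. For the remaining piece, a Taylor expansion gives
\[
\Phi(s,r)=1+(s-1)\left(\frac{\zeta'(1+2ir)}{\zeta(1+2ir)}+\frac{\zeta'(1-2ir)}{\zeta(1-2ir)}\right)+O((s-1)^{2}),
\]
the singularities $\mp 1/(2ir)$ canceling between the two logarithmic derivatives, so the expansion is uniform on the real line; together with $h(\omega;s;r)=h(\omega;1;r)+(s-1)\partial_{s}h(\omega;s;r)|_{s=1}+O((s-1)^{2})$ and $\zeta(s)=1/(s-1)+\gamma+O(s-1)$, the constant-in-$(s-1)$ coefficient yields the two remaining integrals in the statement.

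The chief obstacle is controlling the apparent simple pole at $s=1$ with residue $\pi^{-3/2}\int_{-\infty}^{\infty}h(\omega;1;r)\,dr$ generated by the first piece. Its vanishing is forced by the regularity of the left-hand side already established, but must be extracted directly, either by verifying the Kuznetsov-type integral identity for the spectral kernel $k_{0}$ appearing in the definition of $h$, or by deforming the real-line contour in $Z_{C}(s)$ to avoid the coalescence point $r=0$ and tracking the residues at $r=\pm i(s-1)/2$ as $s\to 1^{+}$; this latter residue, a direct computation, produces once again the boundary contribution $-h(\omega;1;0)/(2\sqrt{\pi})$. Executing this regularization and assembling the surviving terms is the crux of the argument.
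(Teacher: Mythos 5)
Your overall architecture is right, and your route through $Z_C$ is genuinely different from the paper's: you regularize on the real line by factoring out the Poisson kernel $4r^2/((s-1)^2+4r^2)$ and Taylor-expanding $\Phi(s,r)$ and $h(\omega;s;r)$ in $s-1$, whereas the paper (Lemma \ref{lem:contcspec}) substitutes $z=2ir$, shifts the contour past the pole of $\zeta(s+z)$ at $z=1-s$, and reads off the residue $\pi^{1/2-s}\zeta(2s-1)\zeta(2-s)^{-1}h\left(\omega;s;\frac{1-s}{2i}\right)\to -h(\omega;1;0)/(2\sqrt{\pi})$. Both produce the same three terms; your evaluation of the Poisson-kernel piece and of the logarithmic-derivative coefficient (with the $\pm 1/(2ir)$ cancellation) is correct.

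The gap sits exactly at the point you yourself flag as the crux and then leave unexecuted: the coefficient of $1/(s-1)$, namely $\pi^{-3/2}\int_{-\infty}^{\infty}h(\omega;1;r)\,dr$, must be shown to vanish, and neither of your two ``direct'' suggestions delivers this as stated. The contour-deformation alternative in particular only relocates the same quantity: after shifting, the surviving term is $\zeta(s)F(s)$ with $F(1)$ equal (upon moving the contour back to the imaginary axis) to the very same integral, so its vanishing is still needed to cancel the pole of $\zeta(s)$. The paper's resolution is Corollary \ref{cor:h11} together with Lemma \ref{lem:inth}: at $s=l=1$ the hypergeometric function in \eqref{eq:hwsr} collapses, giving $h(\omega;1;r)=\sqrt{\pi}\int_0^\infty\omega(2\cosh\xi)\cos(2r\xi)\,d\xi$, whence Fourier inversion yields $\int_{-\infty}^{\infty}h(\omega;1;r)\,dr=\tfrac{\pi^{3/2}}{2}\,\omega(2)=0$ because $a_1>2$. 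Your soft argument (analyticity of the left-hand side forces the residue to vanish) could in principle replace this identity, but only after the two-term expansion of $Z_C(s)$ is justified uniformly in $r$ over the whole line, including the absolute convergence of $\int \frac{\partial}{\partial s}h(\omega;s;r)\bigr|_{1}\,dr$ and of the integral against $\zeta'/\zeta(1\pm 2ir)$; the paper establishes these by separate integration-by-parts arguments (the end of the proof of Lemma \ref{lem:contcspec}), and your proposal does not address them. Until one of these routes is actually carried out, the identification of $Z_C(1)$ with the stated expression is not proved.
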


Finally, the  special value of \eqref{Lbyk} at the point $s=1$ appears in the elliptic part of the Arthur-Selberg trace formula, as has been shown by Altug in \cite[Eq.~4]{Al}.

Another special value $ \mathscr{L}_{n^2-4l^2}(1/2)$ is also related to some important problems in analytic number theory. There are several reasons for inquiring into investigation of the sums
\begin{equation}\label{eq:form}
\sum_{n=1}^{\infty}\omega(n)\mathscr{L}_{n^2-4l^2}(1/2), \quad l \geq 1
\end{equation}
for some specified function $\omega(n)$.

Firstly, the exact formula for the first moment of symmetric square $L$-functions attached to holomorphic cusp forms of level $1$ and large weight
involves sums of the form \eqref{eq:form}, as shown in \cite[Theorem~2.1]{BF1}.

Secondly, studying \eqref{eq:form}  is important for understanding the combinatorial structure of the second  moment of the symmetric square $L$-functions.

Thirdly, this study explores the idea that the spectral decompositions of the sums
\eqref{eq:form} and
\begin{equation}\label{eq:adddiv}
\sum_{n=1}^{\infty}\omega(n)\tau(n)\tau(n+l) , \quad l\geq 1, \quad \tau(n)=\sum_{d|n}1,
\end{equation}
are similar. Compare \cite[Theorem~3]{Mot} and Theorem \ref{thm:at12}.
In particular, assuming the Lindel\"{o}f hypothesis
\begin{equation*}
\mathscr{L}_{n}(1/2)\ll n^{\epsilon},\quad \epsilon>0,
\end{equation*}
we obtain an asymptotic formula for
\begin{equation*}
\sum_{n\leq X}\mathscr{L}_{n^2-4}(1/2)
\end{equation*}
with the same error term as in the corresponding analysis of the binary additive divisor problem.
\begin{thm}\label{lem:convsum}
For any $\epsilon>0$
\begin{multline}\label{eq:app1}
\sum_{2<n<X}\mathscr{L}_{n^2-4}(1/2)=\frac{X\log{X}}{\zeta(3/2)}+\frac{X}{2\zeta(3/2)}
\Biggl( -2-\frac{\pi}{2}+3\gamma-2\frac{\zeta'(3/2)}{\zeta(3/2)}-\log{8\pi}
\Biggr)\\+O(X^{2/3+2\theta/3+\epsilon}),
\end{multline}
where $\theta$ is the best known result towards the Lindel\"{o}f hypothesis for Dirichlet $L$-functions of real primitive characters.
The current record $\theta=1/6$ is due to Conrey and Iwaniec \cite{CI}.
\end{thm}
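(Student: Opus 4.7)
The plan is to apply the convolution formula of Theorem~\ref{thm:convform}, analytically continued to $s = 1/2$ (analogously to how Theorem~\ref{thm:at1} extends it to $s = 1$), with $l = 1$ to a smooth weight $\omega = \omega_{X,Y}$ approximating the indicator of $(2, X]$. I would take $\omega$ supported on $[2, X+Y]$, identically one on $[2+Y, X]$, and with $\omega^{(k)} \ll Y^{-k}$, where $Y \le X$ is a smoothing parameter to be optimised. Replacing the sharp truncation by this smooth weight contributes an error $O(Y X^{\theta + \epsilon})$ via the assumed pointwise bound $\mathscr{L}_n(1/2) \ll n^{\theta + \epsilon}$ applied on the two transition intervals of length $Y$.

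The two main terms $X \log X/\zeta(3/2)$ and the companion $X$-term will arise from the polar analysis of $MT(1/2) + Z_C(1/2)$ through the analytic continuation. Indeed, $MT(s) = \zeta(2s)/\zeta(1+s) \cdot \int \omega$ carries a simple pole at $s = 1/2$ from $\zeta(2s)$, and its principal part must be cancelled by a polar contribution originating in $Z_C(s)$, exactly as the pole of $\zeta(2s)$ at $s = 1$ is compensated in Theorem~\ref{thm:at1}. Expanding $\zeta(2s)$, $1/\zeta(1+s)$ to two orders and the quotient $\zeta(s+2ir)\zeta(s-2ir)/|\zeta(1+2ir)|^2$ near $(s,r) = (1/2, 0)$, then combining with $\int \omega_{X,Y}(x)\,dx = X + O(Y)$ and $\int \omega_{X,Y}(x) \log x \, dx = X \log X - X + O(Y \log X)$, produces the leading $X\log X /\zeta(3/2)$. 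The constant coefficient of $X$ is then assembled from the Euler constant and derivatives in the Laurent expansion of $\zeta$ (yielding $3\gamma$ and $-2\zeta'(3/2)/\zeta(3/2)$), the $-2$ from $\int \omega \log x - \log X \int \omega$, and $-\pi/2 - \log(8\pi)$ from the residual oscillatory integrals carried by $h(\omega; 1/2; r)$ as $r \to 0$.

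The spectral tails $Z_H(1/2)$ and $Z_D(1/2)$ then need to be shown to be $O(X^{2/3 + 2\theta/3 + \epsilon})$. With $l = 1$ the Hecke factors satisfy $t_{j,k}(1) = t_j(1) = 1$, so these reduce to unweighted spectral sums of $L(\sym^2 u, 1/2)$ weighted by the test functions $g(\omega; 1/2; k)$ and $h(\omega; 1/2; \kappa_j)$. A stationary-phase analysis of $f(\omega; 1/2; x) = \pi^{-1/2}(x/4)^{1/2} \int \omega(y) \cos(xy/2)\,dy$, combined with the oscillation of the Bessel and $k_0$ kernels, shows that the test functions are essentially supported on spectral parameters $\ll X^{1+\epsilon}/Y$. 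Via the Kohnen--Zagier/Waldspurger correspondence, $L(\sym^2 u_j, 1/2)$ is controlled by values $L(1/2, \chi_D)$ of Dirichlet $L$-functions of real primitive characters, so the assumed subconvex exponent $\theta$ enters exactly where it should. Paired with the spectral large sieve on the Maass side and the Petersson large sieve on the holomorphic side, this yields a bound of shape $X^{a} Y^{-b}$ with an additional $X^{c\theta}$ loss from the subconvexity invocation; balancing against $Y X^{\theta}$ by choosing $Y = X^{2/3 - \theta/3}$ gives the claimed error.

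The principal obstacle is the spectral estimate. A naive pointwise use of subconvexity is wasteful, and the argument must interlace subconvexity (via the Kohnen--Zagier identity linking $L(\sym^2 u_j, 1/2)$ to real-character $L$-values) with the spectral large-sieve average so that the exponent $\theta$ contributes exactly the factor $2\theta/3$ in the final error. Controlling the small-$\kappa_j$ and small-$k$ regimes, where the test functions $h$ and $g$ are least localised and oscillation is weakest, is the technical crux; once this is handled, the balance $Y = X^{2/3 - \theta/3}$ is forced by the interplay with the smoothing error and produces the stated bound.
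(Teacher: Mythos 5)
Your overall route coincides with the paper's: smooth the sharp cutoff, apply the convolution formula continued to $s=1/2$ (Theorem \ref{thm:at12}), extract the main terms from the polar interaction of $MT(s)$ with the residue pulled out of $Z_C(s)$, bound the spectral remainders, and unsmooth using pointwise subconvexity. Two steps, however, are wrong as written. First, the unsmoothing bookkeeping: the relevant value is $\mathscr{L}_{n^2-4}(1/2)$ with $n\asymp X$, so the conductor is $n^2-4\asymp X^2$ and \eqref{eq:subL} gives $\mathscr{L}_{n^2-4}(1/2)\ll X^{2\theta+\epsilon}$, not $X^{\theta+\epsilon}$; the transition ranges therefore cost $O(YX^{2\theta+\epsilon})$. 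Balancing this against the spectral error, which (as in Theorem \ref{thm:appl1}) is $\sqrt{X}(X/Y)^{1/2+\epsilon}=XY^{-1/2+\epsilon}$, forces $Y=X^{2/3-4\theta/3}$ rather than your $Y=X^{2/3-\theta/3}$; your choice of $Y$ combined with the correct transition cost would only yield $O(X^{2/3+5\theta/3+\epsilon})$.

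Second, and more seriously, your treatment of $Z_D(1/2)$ rests on a nonexistent identity. Kohnen--Zagier/Waldspurger relates squares of Fourier coefficients of half-integral weight forms to the twisted central values $L(u_j\otimes\chi_D,1/2)$, not to $L(\sym^2 u_j,1/2)$, so there is no way to route the real-character exponent $\theta$ through the spectral sum --- and indeed $\theta$ must not appear there, since the target error $X^{2/3+2\theta/3}$ arises purely from the unsmoothing/spectral balance. The paper instead bounds the Maass contribution using the first-moment estimate \eqref{eq:estsym2}, namely $\sum_{V<|\kappa_j|<2V}\alpha_j|L(\sym^2u_j,1/2)|\ll V^{2+\epsilon}$, together with the two bounds $h(\omega;1/2;r)\ll\sqrt{X}\,r^{-3/2}$ and $h(\omega;1/2;r)\ll X^{3/2}T^{-1}r^{-5/2}$ obtained from Corollary \ref{cor:hyperest}, Lemma \ref{estint} and integration by parts; a dyadic summation over $\kappa_j$ then gives $Z_D(1/2)\ll\sqrt{X}(X/T)^{1/2+\epsilon}$ with no subconvexity loss, and the same bound holds for $Z_C(1/2)$ by standard zeta estimates. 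The holomorphic piece is simpler still: $g(\omega;1/2;k)\ll e^{-ck}$, so $Z_H(1/2)\ll 1$ and no Petersson large sieve is needed. With the spectral step replaced by this argument and the exponents corrected, your proof closes; as stated, the crux you identify is attacked by a method that does not exist and the final exponent does not come out.
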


Lastly, the investigation of
\begin{equation}\label{eq:shortint}
\sum_{X<n\leq X+T}\mathscr{L}_{n^2-4}(1/2+ir), \quad |r|<X^{\epsilon}
\end{equation}
 is ultimately connected to the quality of the error term in the Prime Geodesic Theorem. In particular, it follows from
\cite{SY} that if \eqref{eq:shortint} can be bounded by $T$ for any $T\gg X^{2/3}$, then
\begin{equation}\label{eq:prg}
\Psi_{\Gamma}(x)=x+O(x^{2/3+\epsilon}).
\end{equation}
Unfortunately, the required estimate for \eqref{eq:shortint} is out of reach by our methods. Nevertheless, we prove a smoothed version of this result.
\begin{thm}\label{lem:exp}
For any $\epsilon>0$ and $T>X^{2/3+\epsilon}$ we have
\begin{equation}\label{eq:expshortint}
\sum_{n>2}\mathscr{L}_{n^2-4}(1/2)\exp\left( -\left(\frac{n-X}{T} \right)^2\right)\ll
T.
\end{equation}
\end{thm}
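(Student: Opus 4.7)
The plan is to invoke Theorem~\ref{thm:convform} at $s=1/2$ and $l=1$, in the analytic continuation to $s=1/2$ that is established later in the paper, with $\omega(y) = \chi(y)\exp(-((y-X)/T)^2)$ for a smooth cutoff $\chi$ vanishing at $y=2$ and identically $1$ on $[3,\infty)$. The truncation costs only $O(X^{-A})$ because $T\leq X^{1-\epsilon}$, so the problem reduces to bounding each of $MT(1/2)$, $Z_C(1/2)$, $Z_H(1/2)$, $Z_D(1/2)$ by $\ll T^{1+\epsilon}$.

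The first step will be to compute the integral transforms at $s=1/2$. From \eqref{eq:f2} and the Gaussian structure of $\omega$ one has
\[
f(\omega;1/2;x) \asymp T\sqrt{x}\,\exp(-x^2T^2/16)\cos(xX/2),
\]
so $f$ is essentially supported on $x\leq X^\epsilon/T$ and oscillates at frequency $X/2$. Inserting this into \eqref{eq:g}--\eqref{eq:h} and applying the uniform asymptotics for $J_{2k-1}(x)$ and for the Bessel kernel $k_0(x,ir)$, together with repeated integration by parts exploiting the $xX/2$ phase in the non-resonant regime, will show that $g(\omega;1/2;k)$ and $h(\omega;1/2;r)$ are effectively supported in $k,|r|\leq R := X^{1+\epsilon}/T$, with pointwise bound $\ll T^{1+\epsilon}$ on that range.

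Next I would dispose of the main term and the continuous and holomorphic spectra. The pole of $\zeta(2s)$ at $s=1/2$ in $MT(s)$ is cancelled, on analytic continuation, by a residue arising from a contour shift in $Z_C(s)$; the surviving piece of $MT(1/2)+Z_C(1/2)$ can be bounded by $\int|\omega|\asymp T$ together with the fourth-moment estimate for $\zeta(1/2+it)$ on $[-R,R]$, producing a contribution $\ll T^{1+\epsilon}$. The holomorphic term $Z_H(1/2)$ will be treated analogously, using the Petersson large sieve for the Hecke coefficients $t_{j,k}(1)$ and the convexity bound on $L(\sym^2 u_{j,k},1/2)$ inside the support $k\leq R$.

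The main obstacle will be the Maass contribution
\[
Z_D(1/2) = \sum_j \alpha_j\, t_j(1)\, L(\sym^2 u_j,1/2)\, h(\omega;1/2;\kappa_j),
\]
effectively restricted to $\kappa_j\leq R = X^{1+\epsilon}/T$. The plan is to split $\kappa_j$ into dyadic blocks, apply Cauchy--Schwarz, and combine the spectral large sieve for $\sum_{\kappa_j\sim K}\alpha_j|t_j(1)|^2$ with a sharp second-moment estimate for $L(\sym^2 u_j,1/2)$ in the short spectral window $\kappa_j\sim K$. The second moment is accessible via the approximate functional equation for $L(\sym^2 u_j,1/2)$, which expresses the central value in terms of Dirichlet central values $L(1/2,\chi_D)$, and it is at this stage that the Conrey--Iwaniec subconvex bound $L(1/2,\chi_D)\ll D^{1/6+\epsilon}$ (the input $\theta=1/6$) enters, producing the exponent $2/3$ at the balance point. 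Establishing this spectral second moment sharply enough to reach the threshold $T > X^{2/3+\epsilon}$ will be the crux of the argument.
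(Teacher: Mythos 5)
Your overall skeleton (continue the convolution formula to $s=1/2$ with a Gaussian weight, then bound $Z_C$, $Z_H$, $Z_D$ via the decay of the transforms and mean values of the spectral $L$-values) matches the paper's, but the quantitative heart of your argument does not close. The pointwise bound $h(\omega;1/2;r)\ll T^{1+\epsilon}$ on the range $|r|\le X^{1+\epsilon}/T$ is far too weak. What actually drives the theorem is a bound that \emph{decays} in $r$ on the resonant range, namely $h(\omega;1/2;r)\ll T/\sqrt{rX}$, read off from the explicit hypergeometric representation \eqref{eq:hwsr}: at $s=1/2$ the factor $\Gamma(s/2+ir)\Gamma(1/2+s/2+ir)/\Gamma(1+2ir)$ already contributes $r^{-1/2}$ by Stirling, and $\int\omega(x)x^{-1/2}\,dx\ll TX^{-1/2}$; this is complemented by $h(\omega;1/2;r)\ll X^{3/2}/(Tr^{5/2})$ for $r\gg X/T$ via Corollary \ref{cor:hyperest} and two integrations by parts. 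With only $|h|\ll T$ on $|r|\le R=X^{1+\epsilon}/T$, even granting Lindel\"of on average for the symmetric squares in the strongest form $\sum_{\kappa_j\le R}\alpha_j|L(\sym^2 u_j,1/2)|\ll R^{2+\epsilon}$, you get $Z_D\ll T\cdot R^{2}= X^{2+\epsilon}/T$, which is $\ll T$ only for $T\gg X$; your Cauchy--Schwarz/large-sieve/second-moment refinement cannot recover the missing factor $\sqrt{rX}$, because that factor is lost pointwise, before any averaging over the spectrum. With the correct bounds one gets $Z_D\ll\sum_{V\le X/T}(T/\sqrt{VX})V^{2+\epsilon}+\sum_{V>X/T}(X^{3/2}/(TV^{5/2}))V^{2+\epsilon}\ll X^{1+\epsilon}/\sqrt{T}$, and $X/\sqrt{T}\ll T$ precisely when $T\gg X^{2/3}$ --- that balance, not any subconvexity input, is where the exponent $2/3$ comes from.

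Relatedly, you have misplaced the Conrey--Iwaniec bound and with it the ``crux'' of the proof. The input $\theta=1/6$ is the subconvexity estimate $\mathscr{L}_n(1/2)\ll n^{1/6+\epsilon}$ for the generalized Dirichlet $L$-functions themselves, and it is used only in Theorem \ref{lem:convsum} to remove the smoothing on edge intervals of length $T$ and to optimize $T=X^{2/3-4\theta/3}$; it plays no role in Theorem \ref{lem:exp}. For $Z_D$ the paper needs only the \emph{first}-moment bound \eqref{eq:estsym2}, $\sum_{V<|\kappa_j|<2V}\alpha_j|L(\sym^2u_j,1/2)|\ll V^{2+\epsilon}$, quoted from Ng and Tang--Xu (and $t_j(1)=1$, so no large sieve is needed). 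The sharp spectral second moment of $L(\sym^2 u_j,1/2)$ that you identify as the main obstacle is not accessible by the route you sketch: the approximate functional equation of $L(\sym^2 u_j,1/2)$ does not express it in terms of central values $L(1/2,\chi_D)$ --- that relation is for $\mathscr{L}_n(1/2)$, not for symmetric square $L$-functions of Maass forms --- and such a second moment is in any case unnecessary here.
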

With slightly more careful calculations, it is possible to replace $\mathscr{L}_{n^2-4}(1/2)$ by $\mathscr{L}_{n^2-4}(1/2+ir)$, $|r|<X^{\epsilon}$ in \eqref{eq:expshortint}. However, it is unclear how to remove the exponential multiple.

\section{Notation}
Define the Mellin transform of  $f(x)$ as follows
\begin{equation*}
\tilde{f}(s)=\int_0^{\infty}f(x)x^{s-1}dx.
\end{equation*}
Let $\{u_{j,k} :1\leq j\leq \vartheta(k)\}$ be  the orthonormal basis of the space of holomorphic cusp forms of weight $2k$ and level $1$ consisting of eigenfunctions of all Hecke operators. Any element of this basis has the following Fourier expansion
\begin{equation*}
u_{j,k}(z)=\sum_{n=1}^{\infty}\rho_{j,k}(n)e(nz),\quad e(x):=\exp(2\pi i x),
\end{equation*}
where
\begin{equation*}
\rho_{j,k}(n)=\rho_{j,k}(1)t_{j,k}(n)n^{k-1/2}
\end{equation*}
and $\{t_{j,k}(n)\}$ are the eigenvalues of Hecke operators acting on $u_{j,k}(z)$.
Note that
\begin{equation}\label{eq:multipFourcoeff}
t_{j,k}(n)t_{j,k}(m)=\sum_{d|(m,n)}t_{j,k}\left( \frac{nm}{d^2}\right).
\end{equation}

Let us introduce the normalizing coefficient
\begin{equation*}
\alpha_{j,k}:=\frac{16\Gamma(2k)}{(4\pi)^{2k+1}}|\rho_{j,k}(1)|^2,
\end{equation*}
where $\Gamma(s)$ is the Gamma function.

For $\Re{s}>1$ define the symmetric square $L$-function
\begin{equation*}
L(\sym^2 u_{j,k},s):=\zeta(2s)\sum_{n=1}^{\infty}\frac{t_{j,k}(n^2)}{n^s}.
\end{equation*}

Let $\{u_j\}$ be the orthonormal basis of the space of Maass cusp forms consisting of common eigenfunctions of all Hecke operators and the hyperbolic Laplacian. Denote
$\{t_{j}(n)\}$ the eigenvalues of Hecke operators acting on $u_{j}$ and $\lambda_{j}=1/4+\kappa_{j}^2$ the eigenvalues of the hyperbolic Laplacian acting on $u_{j}$.

Any element of this basis has the following Fourier expansion
\begin{equation*}
u_{j}(x+iy)=\sqrt{y}\sum_{n\neq 0}\rho_{j}(n)K_{i\kappa_j}(2\pi|n|y)e(nx),
\end{equation*}
where $K_{\alpha}(x)$ is the $K$-Bessel function and
\begin{equation*}
\rho_{j}(n)=\rho_{j}(1)t_{j}(n).
\end{equation*}

Similarly to the holomorphic case, we have
\begin{equation}\label{eq:multipFourcoeff2}
t_{j}(n)t_{j}(m)=\sum_{d|(m,n)}t_{j}\left( \frac{nm}{d^2}\right).
\end{equation}

The normalizing coefficient is given by
\begin{equation*}
\alpha_{j}:=\frac{|\rho_{j}(1)|^2}{\cosh{\pi \kappa_j}}.
\end{equation*}

For $\Re{s}>1$ define the symmetric square $L$-function
\begin{equation*}
L(\sym^2 u_{j},s):=\zeta(2s)\sum_{n=1}^{\infty}\frac{t_{j}(n^2)}{n^s}.
\end{equation*}

It follows from \cite[Theorem~7.1.2]{NMH} and \cite[Theorem~1]{TX} that
\begin{equation}\label{eq:estsym2}
\sum_{V<|\kappa_j|<2V}\alpha_j|L(\sym^2u_j,1/2)|\ll V^{2+\epsilon}.
\end{equation}

Let $F(a,b,c;x)$ be the Gauss hypergeometric function and
\begin{equation}\label{eq: psisym2}
\Psi_k(x):=x^k\frac{\Gamma(k-1/4)\Gamma(k+1/4)}{\Gamma(2k)}F\left(k-\frac{1}{4},k+\frac{1}{4},2k;x \right).
\end{equation}

The sum of positive divisors function is defined as
$$\sigma_s(n):=\sum_{d|n}d^s.$$

The Lerch zeta function
\begin{equation*}
\zeta(\alpha,\beta;s):=\sum_{n+\alpha>0}\frac{e(n\beta)}{(n+\alpha)^s},\quad \Re{s}>1,
\end{equation*}
satisfies the functional equation (see \cite{Ler})
\begin{multline}\label{eq:felerch}
\zeta(\beta,0;s)=(2\pi)^{s-1}\Gamma(1-s)\biggl[ -ie(s/4)\zeta(0,\beta;1-s)\\
+e(-s/4)\zeta(0,-\beta;1-s)\biggr].
\end{multline}

Finally, define the Bessel kernel
\begin{equation}\label{k0def}
k_0(x,v):=\frac{1}{2\cos{\pi (1/2+v)}}\left(J_{2v}(x)-J_{-2v}(x) \right),
\end{equation}
where $J_{v}(x)$ is the $J$-Bessel function.

\section{Generalized Kloosterman sums}\label{sec:klsums}

Define the generalized Kloosterman sum by
\begin{equation*}
S(m,n_1,n_2;q):=\sum_{a,b=1}^{q}\delta_q(ab-m)e\left( \frac{an_1+bn_2}{q}\right),
\end{equation*}
where $e(x):=\exp(2\pi i x)$ and
\begin{equation*}
\delta_q(ab-m)=\begin{cases}
1 & \text{ if } ab\equiv m\pmod{q}\\
0 & \text{ otherwise}
\end{cases} .
\end{equation*}
Note that
\begin{equation*}S(1,n_1,n_2;q)=S(n_1,n_2;q),
\end{equation*} where $S(n_1,n_2;q)$ is the classical Kloosterman sum.

\begin{lem}(Selberg's identity) We have
\begin{equation}\label{eq:BKV}
S(m,n_1,n_2;q)=\sum_{d|(m,n_1,q)}dS\left(1,\frac{mn_1}{d^2},n_2;\frac{q}{d} \right).
\end{equation}
\end{lem}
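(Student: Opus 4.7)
The plan is to prove the identity by direct computation from the definition, stratifying the summation range according to the greatest common divisor $d = (b,q)$, exploiting the fact that the congruence $ab \equiv m \pmod{q}$ then forces $d \mid m$ and uniquely determines the residue of $a$ modulo $q/d$.

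Concretely, I would write $b = db'$ with $b' \in \{1,\dots,q/d\}$ and $(b',q/d)=1$. The condition $ab \equiv m \pmod{q}$ becomes $a b' \equiv m/d \pmod{q/d}$, which is solvable only when $d \mid m$ and then uniquely determines $a \equiv (m/d)\,\overline{b'} \pmod{q/d}$. Lifting this back to the range $\{1,\dots,q\}$ produces exactly $d$ values of $a$, namely $a_0 + k(q/d)$ for $k = 0,1,\dots,d-1$, where $a_0 \in \{1,\dots,q/d\}$ is the unique residue with $a_0 \equiv (m/d)\overline{b'} \pmod{q/d}$. Substituting into the definition and factoring out the terms depending only on $b$, the contribution from each $b$ becomes
\begin{equation*}
e\!\left(\frac{b n_2}{q}\right) e\!\left(\frac{a_0 n_1}{q}\right) \sum_{k=0}^{d-1} e\!\left(\frac{k n_1}{d}\right).
\end{equation*}

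The inner geometric sum equals $d$ if $d \mid n_1$ and vanishes otherwise; combined with the constraint $d \mid m$ and the obvious $d \mid q$, this restricts the outer sum to $d \mid (m,n_1,q)$. For such $d$, write $n_1 = d n_1'$ so that $e(a_0 n_1/q) = e(a_0 n_1'/(q/d)) = e((mn_1/d^2)\overline{b'}/(q/d))$, and note that $b = db'$ gives $e(bn_2/q) = e(b' n_2/(q/d))$. The remaining sum over $b'$ coprime to $q/d$ is then exactly the classical Kloosterman sum $S(mn_1/d^2, n_2; q/d)$, which by the remark in the excerpt coincides with $S(1, mn_1/d^2, n_2; q/d)$.

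There is no real obstacle here: the argument is an elementary orthogonality computation, and the only thing requiring care is the bookkeeping when lifting $a$ from residues modulo $q/d$ to representatives in $\{1,\dots,q\}$, which must be handled so that the $d$-fold multiplicity is captured exactly once in the outer factor of $d$ on the right-hand side of \eqref{eq:BKV}.
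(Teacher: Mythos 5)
Your argument is correct and complete. The paper itself does not prove this lemma at all --- its ``proof'' is a pointer to the literature (Bykovskii--Kuznetsov--Vinogradov and Andersson), so your self-contained derivation is genuinely different in that it supplies what the paper omits. The stratification by $d=(b,q)$ is the standard route to Selberg's identity and every step checks out: writing $b=db'$ with $(b',q/d)=1$, the congruence $ab\equiv m\pmod q$ forces $d\mid m$ and pins down $a$ modulo $q/d$; the $d$ lifts $a_0+k(q/d)$ contribute the geometric sum $\sum_{k=0}^{d-1}e(kn_1/d)$, which enforces $d\mid n_1$ and produces the factor $d$; and the remaining exponentials descend cleanly to modulus $q/d$, since $e(bn_2/q)=e(b'n_2/(q/d))$ and $e(a_0n_1/q)=e\bigl((mn_1/d^2)\overline{b'}/(q/d)\bigr)$, leaving exactly the classical Kloosterman sum $S(mn_1/d^2,n_2;q/d)=S(1,mn_1/d^2,n_2;q/d)$. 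What your approach buys is transparency and verifiability within the paper; what the citation buys is brevity and, in the cited sources, a more general form of the identity. The one point worth stating explicitly in a write-up is that $mn_1/d^2=(m/d)(n_1/d)$ is an integer precisely because both divisibility conditions $d\mid m$ and $d\mid n_1$ have been enforced by that stage --- you use this implicitly and it is fine.
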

\begin{proof}
See \cite[Eq.~2.29, p.~48]{BKV} and \cite[p.~100]{A}.
\end{proof}

\begin{lem}\label{lem:Klperm}
The sum $S(m,n_1,n_2;q)$ is invariant under any permutation of $m$, $n_1$, $n_2$.
\end{lem}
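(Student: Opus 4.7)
The key is to break the apparent asymmetry between $m$ and the pair $(n_1,n_2)$ by detecting the congruence $ab\equiv m\pmod{q}$ via orthogonality of additive characters. Starting from
\begin{equation*}
\delta_q(ab-m)=\frac{1}{q}\sum_{c=1}^{q}e\!\left(\frac{c(ab-m)}{q}\right),
\end{equation*}
and substituting into the definition of $S(m,n_1,n_2;q)$ gives the triple exponential sum
\begin{equation*}
S(m,n_1,n_2;q)=\frac{1}{q}\sum_{a,b,c=1}^{q}e\!\left(\frac{abc+an_1+bn_2-cm}{q}\right),
\end{equation*}
in which each of $m,n_1,n_2$ is paired with exactly one of the dummy variables $c,a,b$, and the only nonlinear term $abc$ is totally symmetric in them.

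I would then verify $S_3$-symmetry by handling a generating pair of transpositions. The swap $n_1\leftrightarrow n_2$ is immediate already from the original definition: interchanging the indices $a\leftrightarrow b$ preserves $ab-m$ and swaps $an_1+bn_2$ with $an_2+bn_1$. For the transposition $m\leftrightarrow n_1$ I would work in the triple-sum representation. The change of variables $a\leftrightarrow c$ leaves $abc$ fixed and sends the linear part $an_1+bn_2-cm$ to $cn_1+bn_2-am$; following this with $a\mapsto -a$, $c\mapsto -c$ also preserves $abc$ (sign flips on an even number of the three variables cancel) and transforms the linear part into $am+bn_2-cn_1$. This is precisely the integrand of $S(n_1,m,n_2;q)$ in the triple-sum form, so $S(m,n_1,n_2;q)=S(n_1,m,n_2;q)$.

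Since $(n_1\,n_2)$ and $(m\,n_1)$ generate $S_3$ on $\{m,n_1,n_2\}$, full symmetry follows. The only point requiring any real care is ensuring that the sign flips used to correct the linear part do not disturb the cubic $abc$, and this is automatic because they come in pairs. There is no serious obstacle: the entire argument reduces to a short change-of-variables calculation once the orthogonality relation has been invoked, with the cubic form $abc$ playing the role of a symmetric backbone that permits the three parameters to be freely permuted.
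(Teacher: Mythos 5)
Your proof is correct and is essentially the same as the paper's: both rest on detecting $\delta_q(ab-m)$ by orthogonality to obtain the symmetric triple exponential sum. The only cosmetic difference is that the paper substitutes $c\mapsto -c$ at the outset, writing the exponent as $an_1+bn_2+cm-abc$, which makes the full $S_3$-symmetry manifest without your extra sign-flip step.
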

\begin{proof}
The required property can be derived from the following identity
\begin{equation*}
S(m,n_1,n_2;q)=\frac{1}{q}\sum_{a,b,c=1}^{q}e\left( \frac{an_1+bn_2+cm-abc}{q}\right).
\end{equation*}
\end{proof}

\begin{lem}\label{Kuznetsov}(Kuznetsov trace formula)
Let $\phi \in C^3(0,\infty)$ such that for $\nu=0,1,2,3$ and arbitrary small constant $\epsilon>0$ we have
\begin{equation*}
\phi^{(\nu)}(x)\ll x^{1/2-\nu+\epsilon} \text{ as } x\rightarrow +0,
\end{equation*}
\begin{equation*}
\phi^{(\nu)}(x)\ll x^{-1-\nu-\epsilon} \text{ as } x\rightarrow +\infty.
\end{equation*}
Then for all $m,n \geq 1$
\begin{multline}\label{eq:KuzTrForm}
\sum_{q=1}^{\infty}\frac{S(m,n_1,n_2;q)}{q}\phi\left( \frac{4\pi \sqrt{mn_1n_2}}{q}\right)=
\sum_{j=1}^{\infty}\alpha_jt_j(m)t_j(n_1)t_j(n_2)\widehat{\phi}(\kappa_j)\\
+\sum_{k\geq 6}\sum_{j\leq \vartheta(k)}\alpha_{j,k}t_{j,k}(m)t_{j,k}(n_1)t_{j,k}(n_2)\widehat{\phi}(i(1/2-k))\\+
\frac{1}{\pi}\int_{-\infty}^{+\infty}\frac{\sigma_{2ir}(m)\sigma_{2ir}(n_1)\sigma_{2ir}(n_2)}{(mn_1n_2)^{ir}|\zeta(1+2ir)|^2}\widehat{\phi}(r)dr,
\end{multline}
where
\begin{equation*}
\widehat{\phi}(r)=\pi \int_{0}^{\infty}k_0(x,ir)\phi(x)\frac{dx}{x},
\end{equation*}
\begin{equation*}
\widehat{\phi}(i(1/2-k))=\pi (-1)^k \int_{0}^{\infty}J_{2k-1}(x)\phi(x)\frac{dx}{x}.
\end{equation*}
\end{lem}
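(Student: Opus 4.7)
The plan is to reduce the generalized Kuznetsov formula to the classical one for ordinary Kloosterman sums, for which we may simply cite a standard reference. The bridge is Selberg's identity \eqref{eq:BKV}, together with the Hecke multiplicativity relations \eqref{eq:multipFourcoeff}, \eqref{eq:multipFourcoeff2}, and the elementary arithmetic identity
\begin{equation*}
\sigma_{s}(m)\sigma_{s}(n_1)=\sum_{d\mid (m,n_1)}d^{s}\,\sigma_{s}\!\left(\frac{mn_1}{d^2}\right),
\end{equation*}
which plays for divisor sums the same role that \eqref{eq:multipFourcoeff2} plays for Hecke eigenvalues (both are instances of the Rankin--Selberg type convolution formula and can be checked on prime powers).

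First, I would use \eqref{eq:BKV} to rewrite $S(m,n_1,n_2;q)=\sum_{d\mid (m,n_1,q)} d\,S(1,mn_1/d^2,n_2;q/d)$. Substituting this into the left-hand side of \eqref{eq:KuzTrForm}, setting $q=dq'$ in the inner sum, and observing that $\sqrt{mn_1n_2}/d=\sqrt{(mn_1/d^2)n_2}$, the sum becomes
\begin{equation*}
\sum_{d\mid (m,n_1)}\sum_{q'=1}^{\infty}\frac{S(mn_1/d^2,n_2;q')}{q'}\,\phi\!\left(\frac{4\pi\sqrt{(mn_1/d^2)\,n_2}}{q'}\right),
\end{equation*}
i.e.\ a finite sum of Kloosterman sums of the classical shape $S(a,b;q')$. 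The growth hypotheses on $\phi$ are exactly those required for the classical Kuznetsov trace formula (see for example Motohashi's treatment), so each inner sum expands into a Maass spectral sum, a holomorphic spectral sum, and an Eisenstein contribution, with test functions $\widehat{\phi}(\kappa_j)$, $\widehat{\phi}(i(1/2-k))$, $\widehat{\phi}(r)$ precisely as stated.

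Next, the three spectral pieces are assembled by summing over $d\mid (m,n_1)$. For the Maass part, the $d$-sum produces $\sum_{d\mid(m,n_1)}t_j(mn_1/d^2)=t_j(m)t_j(n_1)$ by \eqref{eq:multipFourcoeff2}; multiplying by the $t_j(n_2)$ already present yields the symmetric factor $t_j(m)t_j(n_1)t_j(n_2)$. The holomorphic term is handled identically using \eqref{eq:multipFourcoeff}. For the Eisenstein part one computes
\begin{equation*}
\sum_{d\mid (m,n_1)}\frac{\sigma_{2ir}(mn_1/d^2)}{(mn_1/d^2)^{ir}}=\frac{1}{(mn_1)^{ir}}\sum_{d\mid (m,n_1)}d^{2ir}\sigma_{2ir}(mn_1/d^2)=\frac{\sigma_{2ir}(m)\sigma_{2ir}(n_1)}{(mn_1)^{ir}},
\end{equation*}
from the divisor-sum identity above, and pairing with the remaining $\sigma_{2ir}(n_2)/n_2^{ir}$ yields the fully symmetric integrand of \eqref{eq:KuzTrForm}.

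The only genuine step of content is the application of the classical Kuznetsov formula, and this is available off the shelf; Selberg's identity plus the multiplicativity of Hecke eigenvalues and of $\sigma_{s}$ then do the rest automatically. The main thing to watch is justifying the interchange of the $d$-sum with the spectral expansion in the Eisenstein piece (since the integral is only conditionally convergent as $|r|\to\infty$ for mildly decaying $\phi$), but the $d$-sum is finite, so no analytic subtlety arises. Consequently the proof is essentially bookkeeping, and I would present it as: apply \eqref{eq:BKV}, reindex $q=dq'$, invoke the classical Kuznetsov formula term by term, and reassemble using \eqref{eq:multipFourcoeff}, \eqref{eq:multipFourcoeff2}, and the divisor identity for $\sigma_{2ir}$.
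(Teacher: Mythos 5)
Your proposal is correct and follows essentially the same route as the paper: apply Selberg's identity \eqref{eq:BKV}, reindex $q=dq'$, invoke the classical Kuznetsov formula (the paper cites Motohashi's Lemma~1) on each inner sum, and reassemble via the Hecke multiplicativity relations \eqref{eq:multipFourcoeff}, \eqref{eq:multipFourcoeff2}. You in fact supply one detail the paper leaves implicit, namely the analogous convolution identity for $\sigma_{2ir}$ needed to reassemble the Eisenstein contribution.
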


\begin{proof}

Consider
\begin{multline*}
\sum_{q=1}^{\infty}\frac{S(m,n_1,n_2;q)}{q}\phi\left( \frac{4\pi \sqrt{mn_1n_2}}{q}\right)\\=\sum_{q=1}^{\infty}\frac{1}{q}\phi\left( \frac{4\pi \sqrt{mn_1n_2}}{q}\right)
\sum_{d|(m,n_1,q)}dS(1,mn_1/d^2,n_2;q/d)\\=
\sum_{d|(m,n_1)}\sum_{q=1}^{\infty}\frac{1}{q}S(1,mn_1/d^2,n_2;q)\phi\left( \frac{4\pi \sqrt{mn_1n_2}}{qd}\right).
\end{multline*}

Applying \cite[Lemma~1]{Mot} to the inner sum and using the identities \eqref{eq:multipFourcoeff} and \eqref{eq:multipFourcoeff2}, we prove the lemma.

\end{proof}

\section{Convolution formula}

In this section we prove Theorem \ref{thm:convform}. The methods we apply are similar to the ones  developed by Kuznetsov in  \cite{K2} while studying the double zeta function
\begin{equation}\label{eq:LKuzminus}
\sum_{n=1}^{\infty}\frac{L^{(-)}_n(s)}{n^{\alpha}},\quad L^{(-)}_n(s):=\sum_{c=1}^{\infty}\frac{a_n(c)}{c^s}, \quad \Re{\alpha},\Re{s}>1,
\end{equation}
where
$a_n(c)$ is the number of integral solutions of $$x^2+nx-1\equiv 0 \pmod{c}.$$

The key idea  is to obtain a representation of the generalized Dirichlet $L$-functions in terms of sums of Kloosterman sums, and then apply the Kuznetsov trace formula.

To guarantee the absolute convergence, we first assume that $\Re{s}>3/2$.

In order to evaluate $\sum_{n=1}^{\infty}\omega(n)\mathscr{L}_{n^2-4l^2}(s),$ we apply the Mellin transform for $\omega(n)$. Consequently, it is required to study the series of the form
$\sum_{n=1}^{\infty}\mathscr{L}_{n^2-4l^2}(s)n^{-\alpha}.$

\begin{lem}\label{lem:zetafunctL} For $\Re{s}>3/2$ and $\Re{\alpha}>1$
\begin{equation*}
\sum_{n=1}^{\infty}\frac{\mathscr{L}_{n^2-4l^2}(s)}{n^{\alpha}}=
\frac{\zeta(2s)}{\zeta(s)}\sum_{q=1}^{\infty}\frac{1}{q^{\alpha+s}}\sum_{a,b=1}^{q}\delta_q(ab-l^2)\zeta\left(-\frac{a+b}{q},0,\alpha\right).
\end{equation*}
\end{lem}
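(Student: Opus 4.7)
The plan is to unfold the Dirichlet series defining $\mathscr{L}_{n^2-4l^2}(s)$, reinterpret $\rho_q(n^2-4l^2)$ as counting pairs $(a,b)\in(\Z/q\Z)^2$ with $ab\equiv l^2$ and $a+b\equiv n\pmod q$, and then identify the inner sum over $n$ as a Lerch zeta value. This parallels the strategy Kuznetsov used in \cite{K2} for the double zeta series \eqref{eq:LKuzminus}.

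By \eqref{Lbyk} and absolute convergence in the ranges $\Re s>3/2$ and $\Re\alpha>1$, I first interchange the summations over $n$ and $q$. The core arithmetic input is the combinatorial identity
$$\rho_q(n^2-4l^2)=\#\{(a,b)\in(\Z/q\Z)^2:ab\equiv l^2,\;a+b\equiv n\pmod q\},$$
which I would prove via the correspondence $x\longleftrightarrow(a,b)=((n-x)/2,(n+x)/2)\pmod q$, with inverse $x\equiv n-2a\pmod{2q}$. The parity constraint $x\equiv n\pmod 2$ that is needed to halve $n\pm x$ follows automatically from $x^2\equiv n^2\pmod 4$; the inverse is well-defined modulo $2q$ precisely because shifting $a$ by $q$ shifts $x$ by $2q$; and a short computation using $a+b=n+kq$ and $ab=l^2+mq$ confirms $(n-2a)^2\equiv n^2-4l^2\pmod{4q}$.

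Once the identity above is in hand, the sum over $n$ factors as
$$\sum_{n\geq 1}\frac{\rho_q(n^2-4l^2)}{n^{\alpha}}=\sum_{a,b=1}^q\delta_q(ab-l^2)\sum_{\substack{n\geq 1\\n\equiv a+b\pmod q}}\frac{1}{n^{\alpha}}.$$
Substituting $k=nq-(a+b)$ in the definition of the Lerch zeta gives
$$\zeta\!\left(-\frac{a+b}{q},0,\alpha\right)=q^{\alpha}\sum_{\substack{k\geq 1\\k\equiv -(a+b)\pmod q}}\frac{1}{k^{\alpha}},$$
and the change of variables $(a,b)\mapsto(q-a,q-b)$ leaves $\delta_q(ab-l^2)$ invariant while mapping $a+b\mapsto -(a+b)\pmod q$; this aligns the congruence in the inner $n$-sum with the one produced by the Lerch zeta. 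Pulling $q^{\alpha}$ into $q^{-(\alpha+s)}$ then produces the claimed right-hand side.

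The only delicate step is the combinatorial identity in the second paragraph, where one must correctly handle the distinction between the moduli $2q$ and $q$ and the parity of $x$; all other manipulations are bookkeeping with absolutely convergent sums.
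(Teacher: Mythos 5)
Your proposal is correct and follows essentially the same route as the paper: the same bijection between square roots of $n^2-4l^2$ modulo $4q$ and pairs $(a,b)$ with $ab\equiv l^2$, $a+b\equiv \pm n\pmod q$ (the paper parametrizes by roots of $y^2+ny+l^2\equiv 0\pmod q$, which builds in the sign that you instead fix at the end via $(a,b)\mapsto(q-a,q-b)$), followed by interchanging summation and recognizing the Lerch zeta value. The only point treated more explicitly in the paper is the justification of absolute convergence (via the exponential-sum form of $\rho_q$ and Weil's bound), which you assert but do not substantiate; an elementary bound on $\rho_q$ suffices, so this is not a gap of substance.
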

\begin{proof}
Note that there is a one-to-one correspondence between the solutions $x\Mod{2q}$ of $x^2\equiv n^2-4l^2 \Mod{4q}$ and the solutions $y\Mod{q}$ of $y^2+ny+l^2\equiv 0 \Mod{q}$. Therefore,
\begin{equation}\label{rho 1}
\rho_q(n^2-4l^2)=\sum_{\substack{y\Mod{q}\\ y^2+l^2+ny\equiv 0\Mod{q}}}1=
\sum_{\substack{x,y\Mod{q}\\ x+y\equiv -n\Mod{q}}}\delta_q(xy-l^2).
\end{equation}
Consequently,
\begin{multline}\label{rho 2}
\rho_q(n^2-4l^2)=\frac{1}{q}
\sum_{x,y\Mod{q}}\delta_q(xy-l^2)\sum_{c\Mod{q}}
e\left(\frac{cx+cy+cn}{q}\right)=\\
\frac{1}{q}\sum_{c\Mod{q}}
e\left(\frac{cn}{q}\right)S(c,c,l^2;q).
\end{multline}
According to \eqref{Lbyk}
\begin{equation*}
\sum_{n=1}^{\infty}\frac{\mathscr{L}_{n^2-4l^2}(s)}{n^{\alpha}}=
\frac{\zeta(2s)}{\zeta(s)}\sum_{n=1}^{\infty}\frac{1}{n^{\alpha}}\sum_{q=1}^{\infty}\frac{\rho_q(n^2-4l^2)}{q^{s}}.
\end{equation*}
Applying \eqref{rho 2} and Weil's bound for Kloosterman sums it follows that the sums above are absolutely convergent. Changing the order of summation and using \eqref{rho 1}, we obtain
\begin{multline*}
\sum_{n=1}^{\infty}\frac{\mathscr{L}_{n^2-4l^2}(s)}{n^{\alpha}}=
\frac{\zeta(2s)}{\zeta(s)}\sum_{q=1}^{\infty}\frac{1}{q^{s}}
\sum_{x,y\Mod{q}}\delta_q(xy-l^2)
\sum_{\substack{n=1\\ n\equiv -x-y\Mod{q}}}^{\infty}\frac{1}{n^{\alpha}}=\\
\frac{\zeta(2s)}{\zeta(s)}\sum_{q=1}^{\infty}\frac{1}{q^{s}}
\sum_{x,y\Mod{q}}\delta_q(xy-l^2)
\frac{1}{q^{\alpha}}\zeta\left(-\frac{x+y}{q},0,\alpha\right).
\end{multline*}
\end{proof}

In order to simplify the expression proved in the previous lemma, we need the following result.
\begin{lem}\label{lem:mterm} For $\Re{s}>1$ we have
\begin{equation*}
\sum_{q=1}^{\infty}\frac{1}{q^{1+s}}\sum_{a,b=1}^{q}\delta_q(ab-l^2)=\sigma_{-s}(l^2)\frac{\zeta(s)}{\zeta(1+s)}.
\end{equation*}
\end{lem}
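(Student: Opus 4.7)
\medskip

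\noindent\textbf{Proof plan.} The plan is to evaluate the inner sum in closed form and then recognize the resulting Dirichlet series as an Euler product factorization. First I would identify the inner sum with a degenerate generalized Kloosterman sum:
\begin{equation*}
\sum_{a,b=1}^{q}\delta_q(ab-l^2)=S(l^2,0,0;q),
\end{equation*}
since setting $n_1=n_2=0$ removes the exponential factor. Applying Selberg's identity \eqref{eq:BKV} (using the convention $\gcd(0,m)=m$) gives
\begin{equation*}
S(l^2,0,0;q)=\sum_{d\mid(l^2,q)}d\,S\!\left(1,0,0;\tfrac{q}{d}\right)=\sum_{d\mid(l^2,q)}d\,\varphi(q/d),
\end{equation*}
where the last step uses that $S(1,0,0;m)=\#\{(a,b)\in(\Z/m\Z)^2:ab\equiv 1\}=\varphi(m)$. (Alternatively, one can obtain the same formula by a direct count: for each $a$ mod $q$ with $\gcd(a,q)=d$, the congruence $ab\equiv l^2\Mod q$ is solvable iff $d\mid l^2$, in which case it has exactly $d$ solutions in $b$; grouping by $d$ gives the same expression.)

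Next I would insert this into the outer Dirichlet series, swap the order of summation, and substitute $q=dm$:
\begin{equation*}
\sum_{q=1}^{\infty}\frac{1}{q^{1+s}}\sum_{a,b=1}^{q}\delta_q(ab-l^2)
=\sum_{d\mid l^2}d\sum_{\substack{q=1\\ d\mid q}}^{\infty}\frac{\varphi(q/d)}{q^{1+s}}
=\sum_{d\mid l^2}\frac{1}{d^s}\sum_{m=1}^{\infty}\frac{\varphi(m)}{m^{1+s}}.
\end{equation*}
Absolute convergence for $\Re s>1$ justifies the rearrangement (the inner series converges since $\varphi(m)\le m$).

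Finally, I would recognize the two factors. The divisor sum is $\sum_{d\mid l^2}d^{-s}=\sigma_{-s}(l^2)$, and the classical identity
\begin{equation*}
\sum_{m=1}^{\infty}\frac{\varphi(m)}{m^{1+s}}=\frac{\zeta(s)}{\zeta(1+s)}\qquad(\Re s>1)
\end{equation*}
(which follows immediately from $\varphi=\mathrm{id}*\mu$) yields the claimed formula. The only mild subtlety is the interpretation of $\gcd(0,0,q)$ when invoking Selberg's identity; since the entries $n_1,n_2$ appear only through an exponential character that trivializes when $n_1=n_2=0$, this causes no difficulty, and the direct combinatorial count above provides a self-contained alternative should one prefer to avoid that convention.
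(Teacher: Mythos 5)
Your proof is correct, but it takes a genuinely different route from the paper's. The paper never evaluates $S(l^2,0,0;q)$ in closed form: it uses the permutation invariance of Lemma \ref{lem:Klperm} to pass to $S(0,l^2,0;q)$, i.e.\ to the sum of $e(al^2/q)$ over pairs with $ab\equiv 0\Mod{q}$, counts the admissible $b$ to obtain $\sum_{a=1}^{q}(a,q)\,e(al^2/q)$, groups by $(a,q)=d$ so that Ramanujan sums $S(0,l^2;q/d)$ appear, and finishes with the identity $\sum_{q}S(0,l^2;q)q^{-1-s}=\sigma_{-s}(l^2)/\zeta(1+s)$ times the free factor $\sum_{d}d^{-s}=\zeta(s)$. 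You instead evaluate the degenerate Kloosterman sum outright as $\sum_{d\mid(l^2,q)}d\,\varphi(q/d)$ --- via Selberg's identity \eqref{eq:BKV} or, more robustly, via your direct count --- and close with $\sum_{m}\varphi(m)m^{-1-s}=\zeta(s)/\zeta(1+s)$. These are two different Dirichlet-convolution decompositions of the same arithmetic function (reconciled by $S(0,n;m)=\sum_{d\mid(m,n)}d\,\mu(m/d)$), and the final factorization arises from a different pairing of the $\zeta(s)$ and $\sigma_{-s}(l^2)/\zeta(1+s)$ pieces. Your direct combinatorial count is the more self-contained option, sidestepping both the permutation lemma and the $\gcd$-with-zero convention in Selberg's identity, while the paper's argument has the mild advantage of staying inside the Kloosterman/Ramanujan-sum formalism of Section \ref{sec:klsums}. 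The convergence justification for $\Re{s}>1$ is routine either way.
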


\begin{proof}
Note that
\begin{equation*}
\sum_{a,b=1}^{q}\delta_q(ab-l^2)=S(l^2,0,0;q)=S(0,l^2,0;q)=\sum_{a,b=1}^{q}\delta_q(ab)e\left( \frac{al^2}{q}\right).
\end{equation*}
Thus
\begin{multline*}
\sum_{q=1}^{\infty}\frac{1}{q^{1+s}}\sum_{a,b=1}^{q}\delta_q(ab-l^2)=\sum_{q=1}^{\infty}\frac{1}{q^{1+s}}\sum_{\substack{a,b=1\\ab \equiv 0\pmod{q}}}^{q}e\left( \frac{al^2}{q}\right)=\\
\sum_{q=1}^{\infty}\frac{1}{q^{1+s}}\sum_{a=1}^{q}
\sum_{\substack{b=1\\b\equiv 0\pmod{q/(a,q)}}}^qe\left( \frac{al^2}{q}\right)=
\sum_{q=1}^{\infty}\frac{1}{q^{1+s}}\sum_{a=1}^{q}e\left( \frac{al^2}{q}\right)(a,q).
\end{multline*}
The inner sum can be expressed in terms of Ramanujan sums as follows
\begin{equation*}
\sum_{a=1}^{q}e\left( \frac{al^2}{q}\right)(a,q)=\sum_{d|q}d\sum_{\substack{a\pmod{q}\\ (a,q)=d}}e\left( \frac{al^2}{q}\right)
=\sum_{d|q}d S(0,l^2;q/d).
\end{equation*}
Consequently,
\begin{multline*}
\sum_{q=1}^{\infty}\frac{1}{q^{1+s}}\sum_{a,b=1}^{q}\delta_q(ab-l^2)=\sum_{q=1}^{\infty}\frac{1}{q^{1+s}}\sum_{d|q}d S(0,l^2;q/d)\\=\sum_{d=1}^{\infty}\frac{1}{d^s}\sum_{q=1}^{\infty}\frac{ S(0,l^2;q)}{q^{1+s}}=
\sigma_{-s}(l^2)\frac{\zeta(s)}{\zeta(1+s)}.
\end{multline*}
\end{proof}

Combining all the results, we express the sums of the generalized Dirichlet $L$-functions in terms of sums of Kloosterman sums.
\begin{lem}
For $\Re{s}>3/2$ we have
\begin{multline}\label{eq:finalexp}
\sum_{n=1}^{\infty}\omega(n)\mathscr{L}_{n^2-4l^2}(s)=\widetilde{\omega}(1)\sigma_{-s}(l^2)\frac{\zeta(2s)}{\zeta(1+s)} +\\
\frac{\zeta(2s)}{\zeta(s)}\pi^{1/2-s}\sum_{n=1}^{\infty}\frac{1}{n^s}
\sum_{q=1}^{\infty}\frac{1}{q}S(l^2,n,n;q)f\left(\omega;s;\frac{4\pi nl}{q}\right),
\end{multline}
where
\begin{equation}\label{eq:f}
f(\omega;s;x)=\frac{1}{2\pi i}\int_{(\delta)}\frac{\Gamma(1/2-\alpha/2)}{\Gamma(\alpha/2)}\widetilde{\omega}(\alpha)\left( \frac{x}{4l}\right)^{\alpha+s-1}d\alpha,
\end{equation}
where $\delta<1.$
\end{lem}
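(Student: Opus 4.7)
The plan is to invert the Mellin transform of $\omega$ inside the sum, apply Lemma \ref{lem:zetafunctL} to express the resulting Dirichlet series in terms of Lerch zeta values, use the Lerch functional equation to convert into sums of generalized Kloosterman sums, and finally shift the contour past the pole at $\alpha=1$ to extract the main term. Since $\omega\in C^{\infty}$ is compactly supported in $(0,\infty)$, its Mellin transform $\widetilde{\omega}(\alpha)$ is entire and of super-polynomial decay on vertical lines; for any $\delta>1$, Mellin inversion gives
\begin{equation*}
\sum_{n=1}^{\infty}\omega(n)\mathscr{L}_{n^2-4l^2}(s)=\frac{1}{2\pi i}\int_{(\delta)}\widetilde{\omega}(\alpha)F(\alpha)\,d\alpha,\qquad F(\alpha):=\sum_{n=1}^{\infty}\frac{\mathscr{L}_{n^2-4l^2}(s)}{n^{\alpha}},
\end{equation*}
with $F(\alpha)$ absolutely convergent in the indicated half-plane and expressed by Lemma \ref{lem:zetafunctL} as a double sum over $q$ and $(a,b)$ of $\zeta(-(a+b)/q,0,\alpha)$.

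Next I apply the Lerch functional equation \eqref{eq:felerch} termwise. The series $\zeta(0,\pm(a+b)/q,1-\alpha)$ produce exponentials $e(\pm n(a+b)/q)$; summation over $a,b$ weighted by $\delta_q(ab-l^2)$ reassembles these into the Kloosterman sums $S(l^2,\pm n,\pm n;q)$, both of which coincide with $S(l^2,n,n;q)$ via the symmetry $(a,b)\mapsto(q-a,q-b)$ (equivalently, the reality of these Kloosterman sums). The surviving trigonometric prefactor combines with $(2\pi)^{\alpha-1}\Gamma(1-\alpha)$ through the Euler reflection and Legendre duplication identities to collapse to $\pi^{\alpha-1/2}\Gamma(1/2-\alpha/2)/\Gamma(\alpha/2)$. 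This yields, in the half-plane $\Re\alpha<0$ where the emerging double series converges absolutely by Weil's bound applied via Selberg's identity \eqref{eq:BKV},
\begin{equation*}
F(\alpha)=\frac{\zeta(2s)}{\zeta(s)}\pi^{\alpha-1/2}\frac{\Gamma(1/2-\alpha/2)}{\Gamma(\alpha/2)}\sum_{q=1}^{\infty}\sum_{n=1}^{\infty}\frac{S(l^2,n,n;q)}{q^{\alpha+s}n^{1-\alpha}}.
\end{equation*}

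With both representations of $F(\alpha)$ in hand, I shift the contour from $\Re\alpha=\delta>1$ to some $\delta'<0$. The super-polynomial decay of $\widetilde{\omega}$ justifies the shift, and the only pole crossed is the simple pole of $F$ at $\alpha=1$. Reading the residue from the Lerch-zeta representation---each $\zeta(-(a+b)/q,0,\alpha)$ has residue $1$---and invoking Lemma \ref{lem:mterm} gives
\begin{equation*}
\mathrm{Res}_{\alpha=1}F(\alpha)=\frac{\zeta(2s)}{\zeta(s)}\sigma_{-s}(l^2)\frac{\zeta(s)}{\zeta(1+s)}=\sigma_{-s}(l^2)\frac{\zeta(2s)}{\zeta(1+s)},
\end{equation*}
which after multiplication by $\widetilde{\omega}(1)$ is the main term in \eqref{eq:finalexp}. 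On the shifted contour I substitute the Kloosterman representation of $F$ and interchange the sum and the integral (by absolute convergence); with $x=4\pi nl/q$, so that $(\pi n/q)^{\alpha+s-1}=(x/(4l))^{\alpha+s-1}$, the inner Mellin integral is exactly $\pi^{1-s}q^{-1}n^{-s}f(\omega;s;4\pi nl/q)$ by the definition \eqref{eq:f}, and summing over $(q,n)$ produces the second term on the right of \eqref{eq:finalexp}.

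The main obstacles are a careful bookkeeping of the powers of $\pi$ and $2$ in the $\Gamma$-factor collapse---the core identity $2\sin(\pi\alpha/2)\Gamma(1-\alpha)=\sqrt{\pi}\,2^{1-\alpha}\Gamma(1/2-\alpha/2)/\Gamma(\alpha/2)$ suffices---and the justification of the interchange of summation and integration on the shifted contour. The latter rests on a polynomial bound of the shape $|S(l^2,n,n;q)|\ll q^{1/2+\epsilon}(l^2,n,q)^{1/2}$ produced by Weil's bound via Selberg's identity; absolute convergence of the resulting double series in the strip $3/2-\Re s+\epsilon<\Re\alpha<0$ is what forces (and is secured by) the standing assumption $\Re s>3/2$.
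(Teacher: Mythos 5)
Your proposal is correct and follows essentially the same route as the paper: Mellin inversion, Lemma \ref{lem:zetafunctL}, the Lerch functional equation \eqref{eq:felerch} combined with the reflection/duplication identities to produce $\pi^{\alpha-1/2}\Gamma(1/2-\alpha/2)/\Gamma(\alpha/2)$ and the Kloosterman sums $S(l^2,n,n;q)$, and a contour shift past $\alpha=1$ whose residue is evaluated by Lemma \ref{lem:mterm}; the only (immaterial) difference is that the paper shifts the contour first and applies the Lerch functional equation afterwards, whereas you continue $F(\alpha)$ to $\Re\alpha<0$ first and then shift. The one slip is the intermediate constant $\pi^{1-s}q^{-1}n^{-s}$, which should read $\pi^{1/2-s}q^{-1}n^{-s}$, consistent with the prefactor $\pi^{1/2-s}$ in \eqref{eq:finalexp}.
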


\begin{proof}

Applying Lemma \ref{lem:zetafunctL}  we obtain for $\sigma>1$ the following expression
\begin{multline*}
\sum_{n=1}^{\infty}\omega(n)\mathscr{L}_{n^2-4l^2}(s)=\\ \frac{\zeta(2s)}{\zeta(s)}
\frac{1}{2\pi i}\int_{(\sigma)}\widetilde{\omega}(\alpha)\sum_{q=1}^{\infty}\frac{1}{q^{\alpha+s}}\sum_{a,b=1}^{q}\delta_q(ab-l^2)\zeta\left( -\frac{a+b}{q},0;\alpha\right)d\alpha.
\end{multline*}
Moving the contour of integration to $-\epsilon$ for an arbitrary small $\epsilon>0$, we cross a pole at $\alpha=1$. Therefore,
\begin{multline}\label{sum L1}
\sum_{n=1}^{\infty}\omega(n)\mathscr{L}_{n^2-4l^2}(s)=\widetilde{\omega}(1)\frac{\zeta(2s)}{\zeta(s)}\sum_{q=1}^{\infty}\frac{1}{q^{1+s}}\sum_{a,b=1}^{q}\delta_q(ab-l^2)\\+\frac{\zeta(2s)}{\zeta(s)}
\frac{1}{2\pi i}\int_{(-\epsilon)}\widetilde{\omega}(\alpha)\sum_{q=1}^{\infty}\frac{1}{q^{\alpha+s}}\sum_{a,b=1}^{q}\delta_q(ab-l^2)\zeta\left( -\frac{a+b}{q},0;\alpha\right)d\alpha.
\end{multline}

By Lemma \ref{lem:mterm} we have
\begin{equation}\label{sum Lmainterm}
\widetilde{\omega}(1)\frac{\zeta(2s)}{\zeta(s)}\sum_{q=1}^{\infty}\frac{1}{q^{1+s}}\sum_{a,b=1}^{q}\delta_q(ab-l^2)=
\widetilde{\omega}(1)\sigma_{-s}(l^2)\frac{\zeta(2s)}{\zeta(1+s)}.
\end{equation}

Applying the functional equation \eqref{eq:felerch} for the Lerch zeta function, we obtain
\begin{multline*}
\sum_{a,b=1}^{q}\delta_q(ab-l^2)\zeta\left( -\frac{a+b}{q},0;\alpha\right)=(2\pi)^{\alpha-1}\Gamma(1-\alpha)\sum_{n=1}^{\infty}\frac{1}{n^{1-\alpha}}\times \\
\biggl( -ie(\alpha/4)\sum_{a,b=1}^{q}\delta_q(ab-l^2)e\left( -\frac{na+nb}{q}\right)\\+ie(-\alpha/4)\sum_{a,b=1}^{q}\delta_q(ab-l^2)e\left( \frac{na+nb}{q}\right)\biggr).
\end{multline*}

Note that
\begin{equation*}
-ie(\alpha/4)+ie(-\alpha/4)=2\sin\frac{\pi\alpha}{2}.
\end{equation*}

Therefore,
\begin{multline*}
\sum_{a,b=1}^{q}\delta_q(ab-l^2)\zeta\left( -\frac{a+b}{q},0;\alpha\right)\\=
2(2\pi)^{\alpha-1}\sin\frac{\pi\alpha}{2}\Gamma(1-\alpha)\sum_{n=1}^{\infty}\frac{S(l^2,n,n;q)}{n^{1-\alpha}}.
\end{multline*}

Then \cite[Eqs.~5.5.3,~5.5.5]{HMF} yield
\begin{multline}\label{sum Lerrorterm}
\sum_{a,b=1}^{q}\delta_q(ab-l^2)\zeta\left( -\frac{a+b}{q},0;\alpha\right)\\=
\pi^{\alpha-1/2}\frac{\Gamma(1/2-\alpha/2)}{\Gamma(\alpha/2)}
\sum_{n=1}^{\infty}\frac{S(l^2,n,n;q)}{n^{1-\alpha}}.
\end{multline}

Substituting \eqref{sum Lmainterm} and \eqref{sum Lerrorterm} to \eqref{sum L1}, we obtain

\begin{multline*}
\sum_{n=1}^{\infty}\omega(n)\mathscr{L}_{n^2-4l^2}(s)=\widetilde{\omega}(1)\sigma_{-s}(l^2)\frac{\zeta(2s)}{\zeta(1+s)} +
\frac{\zeta(2s)}{\zeta(s)}\sum_{q=1}^{\infty}\frac{1}{q^s}\\ \times\sum_{n=1}^{\infty}\frac{S(l^2,n,n;q)}{n}
\frac{1}{2\pi i}\int_{(-\epsilon)} \pi^{\alpha-1/2}\frac{\Gamma(1/2-\alpha/2)}{\Gamma(\alpha/2)}\left( \frac{n}{q}\right)^{\alpha}\widetilde{\omega}(\alpha)d\alpha.
\end{multline*}
Applying equation \eqref{eq:f}, the assertion follows.
\end{proof}
According to \cite[Eqs. 5.5.3, 5.5.5, 5.9.6]{HMF}  the following identity holds for $0<\Re{s}<1$
\begin{equation}\label{cosMellin}
\int_0^{\infty}\cos(x)x^{s-1}dx=\Gamma(s)\cos(\pi s/2)=\sqrt{\pi}2^{s-1}\frac{\Gamma(s/2)}{\Gamma(1/2-s/2)}.
\end{equation}
 Applying \eqref{eq:f}, \eqref{cosMellin} and the Mellin inversion formula \cite[Eq. 1.14.36]{HMF}, we obtain \eqref{eq:f2}.

In order to prove Theorem \ref{thm:convform} we evaluate the sums of Kloosterman sums in \eqref{eq:finalexp} by using the Kuznetsov trace formula given by equation \eqref{eq:KuzTrForm}.
It follows from \eqref{eq:multipFourcoeff} that
\begin{multline*}
\sum_{n=1}^{\infty}\frac{t_{j,k}^{2}(n)}{n^s}=\sum_{n=1}^{\infty}\frac{1}{n^s}\sum_{d|n}t_{j,k}\left( \frac{n^2}{d^2}\right)=
\sum_{d=1}^{\infty}\frac{1}{d^s}\sum_{n=1}^{\infty}\frac{t_{j,k}( n^2)}{n^s}\\=\frac{\zeta(s)}{\zeta(2s)}L(\sym^2 u_{j,k},s).
\end{multline*}
Analogously,
\begin{equation*}
\sum_{n=1}^{\infty}\frac{t_{j}^{2}(n)}{n^s}=\frac{\zeta(s)}{\zeta(2s)}L(\sym^2 u_{j},s).
\end{equation*}
Finally,
\begin{equation*}
\sum_{n=1}^{\infty}\frac{\sigma_{2ir}^{2}(n)}{n^{s+2ir}}=\frac{\zeta(s+2ir)\zeta(s-2ir)\zeta^2(s)}{\zeta(2s)}.
\end{equation*}
This completes the proof of Theorem \ref{thm:convform} for $\Re{s}>3/2$. By analytic continuation
 the convolution formula is valid for $\Re{s}>1$.

\section{Some properties of the weight functions}

In this section we study the weight functions appearing in Theorem \ref{thm:convform}, namely  $g(\omega;s;k)$ and $h(\omega;s;k)$. Properties of these functions are important for the analytic continuation of the convolution formula  in Theorem \ref{thm:convform} to regions containing the special points $s=1$ and $s=1/2$.

\begin{lem}\label{lem:g} For $1-2k<\Re{s}<3/2$ We have
\begin{equation*}
g(\omega;s;k)=g_1(\omega;s;k)+g_2(\omega;s;k),
\end{equation*}
where
\begin{multline}\label{g1def}
g_1(\omega;s;k)=(2l)^{2k-1}\sin{\frac{\pi s}{2}}\int_{2l}^{\infty}\omega(x)\Gamma(k+s/2-1/2)\\
\times\frac{\Gamma(k+s/2)}{\Gamma(2k)} x^{1-2k-s}F(k+s/2-1/2,k+s/2,2k;4l^2/x^2)dx.
\end{multline}
\begin{multline}\label{g2def}
g_2(\omega;s;k)=
\frac{1}{(2l)^s}\cos{\frac{\pi s}{2}}\int_{0}^{2l}\omega(x)\Gamma(k+s/2-1/2)\\
\times\frac{\Gamma(1/2+s/2-k)}{\Gamma(1/2)} F(k+s/2-1/2,1/2+s/2-k,1/2;x^2/(4l^2))dx.
\end{multline}
\end{lem}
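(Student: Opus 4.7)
The plan is to substitute the integral representation \eqref{eq:f2} for $f(\omega;s;x)$ directly into \eqref{eq:g}, interchange the order of integration, and then apply a classical discontinuous Weber--Schafheitlin integral. Explicitly, one obtains
\begin{equation*}
g(\omega;s;k)=2\sqrt{\pi}\,(-1)^{k}(4l)^{-s}\int_{0}^{\infty}\omega(y)\int_{0}^{\infty}J_{2k-1}(x)x^{s-1}\cos\!\left(\frac{xy}{2l}\right)dx\,dy,
\end{equation*}
so the entire task reduces to evaluating the inner Mellin-type Bessel--cosine integral for a general $y>0$.

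The interchange is justified by absolute convergence of the inner integral, which uses $J_{2k-1}(x)\ll x^{2k-1}$ near $0$ and $J_{2k-1}(x)\ll x^{-1/2}$ with oscillation at infinity; these two endpoints produce exactly the constraint $1-2k<\Re s<3/2$ stated in the lemma. Since $\omega$ is smooth and compactly supported away from $0$ and $\infty$, the $y$-integration causes no additional difficulty.

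The inner integral is a discontinuous integral of Weber--Schafheitlin type (see, e.g., Gradshteyn--Ryzhik 6.699.1--2): it evaluates to one closed-form Gauss hypergeometric expression when $y/(2l)<1$ and to a different one when $y/(2l)>1$. Splitting the outer $y$-integral at the transition point $y=2l$ (a set of measure zero) accordingly produces the announced decomposition $g=g_{1}+g_{2}$, with the $y>2l$ piece carrying a hypergeometric of argument $4l^{2}/y^{2}$ and denominator parameter $2k$, and the $y<2l$ piece carrying a hypergeometric of argument $y^{2}/(4l^{2})$ and denominator parameter $1/2$.

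What remains is a bookkeeping exercise: recasting the trigonometric and gamma prefactors produced by the tables into the exact form of \eqref{g1def}--\eqref{g2def}. On the $y>2l$ side, one uses the identity $\cos(\pi(s+2k-1)/2)=(-1)^{k}\sin(\pi s/2)$, valid for integer $k\geq 1$, which cancels the outer $(-1)^{k}$, together with the duplication formula $\Gamma(s+2k-1)=\pi^{-1/2}2^{s+2k-2}\Gamma(k+s/2-1/2)\Gamma(k+s/2)$ to split the single gamma into the two gammas displayed in \eqref{g1def}. On the $y<2l$ side, the reciprocal $1/\Gamma(k+1/2-s/2)$ is converted via the reflection formula $\Gamma(z)\Gamma(1-z)=\pi/\sin(\pi z)$ into $(-1)^{k}\pi^{-1}\cos(\pi s/2)\Gamma(1/2+s/2-k)$, which after absorbing $1/\sqrt{\pi}=1/\Gamma(1/2)$ and combining with the $(-1)^{k}$ and the remaining prefactors yields precisely \eqref{g2def}. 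I expect the main obstacle to be purely notational: identifying the correct tabulated formula and reconciling the resulting gamma and trigonometric factors without sign errors; there is no conceptual difficulty beyond this.
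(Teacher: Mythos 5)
Your proposal is correct and follows essentially the same route as the paper: substitute \eqref{eq:f2} into \eqref{eq:g}, interchange the integrals, split the $y$-integral at $y=2l$, evaluate the inner Bessel--cosine integral by the Weber--Schafheitlin formula [GR, Eq.~6.699(2)], and then tidy up with the reflection and duplication formulas for the Gamma function. The gamma and trigonometric identities you invoke (in particular $\cos(\pi(s+2k-1)/2)=(-1)^k\sin(\pi s/2)$ and the conversion of $1/\Gamma(k+1/2-s/2)$ via reflection) check out and reproduce \eqref{g1def}--\eqref{g2def} exactly.
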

\begin{proof}
According to \eqref{eq:f2} and \eqref{eq:g} the following holds
\begin{multline*}
g(\omega;s;k)=\frac{2\pi^{1/2}(-1)^k}{(4l)^s}
\int_{0}^{\infty}\omega(y)
\int_{0}^{\infty}J_{2k-1}(x)x^{s-1}
\cos\left(\frac{xy}{2l}\right)dxdy\\
=g_1(\omega;s;k)+g_2(\omega;s;k),
\end{multline*}
where $g_1(\omega;s;k)$ corresponds to the integral over $y\geq 2l$ and $g_2(\omega;s;k)$ corresponds to the integral over $y> 2l.$

First, consider $g_2(\omega;s;k)$. Applying \cite[Eq.~6.699(2)]{GR}, we have for $1-2k<\Re{s}<3/2$
\begin{multline*}
g_2(\omega;s;k)=\frac{\pi^{1/2}(-1)^k}{(2l)^s}
\int_{0}^{2l}\omega(y)\frac{\Gamma(k+s/2-1/2)}{\Gamma(k+1/2-s/2)}\\\times
 F(k+s/2-1/2,1/2+s/2-k,1/2;y^2/(4l^2))dy.
\end{multline*}
Applying \cite[Eqs. 5.5.3, 5.4.6]{HMF}, we obtain \eqref{g2def}.

Second, consider $g_1(\omega;s;k)$. Using \cite[Eq.~6.699(2)]{GR}, we have for $1-2k<\Re{s}<3/2$
\begin{multline*}
g_1(\omega;s;k)=
\frac{\pi^{1/2}2^{2-2k}(2l)^{2k+s-1}}{(4l)^s}\sin{\frac{\pi s}{2}}
\int_{2l}^{\infty}\omega(y)\frac{\Gamma(2k+s-1)}{\Gamma(2k)} y^{1-2k-s}\\\times
F(k+s/2-1/2,k+s/2,2k;4l^2/y^2)dy.
\end{multline*}
Applying \cite[Eq. 5.5.5]{HMF} to $\Gamma(2k+s-1)$, we obtain \eqref{g1def}.
\end{proof}

\begin{lem}\label{lem:hcont}
For $0<\Re{s}<3/2$ we have
\begin{equation}\label{eq:hh1h2}
h(\omega;s;r)=h_1(\omega;s;r)+h_1(\omega;s;-r)+h_2(\omega;s;r),
\end{equation}
where
\begin{multline}\label{eq:hwsr}
h_1(\omega;s;r)=\int_{2l}^{\infty}\frac{\omega(x)}{2x^s}\frac{\sin{\pi(s/2-1/2+ir)}}{\sin{(\pi i r)}}\Gamma(1/2+s/2+ir)\\ \times \frac{\Gamma(s/2+ir)}{\Gamma(1+2ir)}\left( \frac{x}{2l}\right)^{-2ir}
F(s/2+ir,s/2+1/2+ir,1+2ir;4l^2/x^2)dx,
\end{multline}
\begin{multline}\label{h2def}
h_2(\omega;s;r)=\int_{0}^{2l}\frac{\omega(x)}{(2l)^s}\cos{\frac{\pi s}{2}} \frac{\Gamma(s/2+ir)\Gamma(s/2-ir)}{\Gamma(1/2)}\\ \times
F(s/2+ir,s/2-ir,1/2;x^2/(4l^2))dx.
\end{multline}
\end{lem}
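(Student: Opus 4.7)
The plan is to follow the pattern established in the proof of Lemma~\ref{lem:g}. Inserting the definitions \eqref{eq:f2} and \eqref{k0def} into \eqref{eq:h} and interchanging the order of integration---legitimate for $\Re s$ in a suitable narrow substrip where the double integral converges absolutely, and to be extended by analytic continuation at the end---one rewrites
\begin{equation*}
h(\omega;s;r)=\frac{\sqrt{\pi}}{(4l)^s\cos(\pi(1/2+ir))}\int_0^{\infty}\omega(y)\int_0^{\infty}\bigl(J_{2ir}(x)-J_{-2ir}(x)\bigr)x^{s-1}\cos\left(\frac{xy}{2l}\right)dx\,dy.
\end{equation*}
I then split the $y$-integration at $y=2l$ and evaluate the inner $x$-integral by \cite[Eq.~6.699(2)]{GR}, applied separately to $J_{2ir}(x)$ and to $J_{-2ir}(x)$. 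The hypothesis of that formula, $-\Re(\pm 2ir)<\Re s<3/2$, reduces to the advertised strip $0<\Re s<3/2$ for real $r$.

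On the tail $y>2l$, which is the $b>a$ case of 6.699(2), the inner integral returns a Gauss hypergeometric function in $4l^2/y^2$ with parameters $(s/2\pm ir,1/2+s/2\pm ir,1\pm 2ir)$, together with Gamma factors $\Gamma(s/2\pm ir)\Gamma(1/2+s/2\pm ir)/\Gamma(1\pm 2ir)$ and a trigonometric prefactor of the form $\sin(\pi(s/2-1/2\pm ir))$. Absorbing the outer factor $1/\cos(\pi(1/2+ir))=-1/\sin(\pi ir)$, the $J_{2ir}$-piece is exactly $h_1(\omega;s;r)$ and the $-J_{-2ir}$-piece is $h_1(\omega;s;-r)$, producing the first two summands of \eqref{eq:hh1h2}.

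On the core $y<2l$, which is the $b<a$ case of 6.699(2), the inner integral returns a hypergeometric in $x^2/(4l^2)$ with parameters $(s/2+ir,s/2-ir,1/2)$ and the Gamma factor $\Gamma(s/2+ir)\Gamma(s/2-ir)/\Gamma(1/2)$. The symmetry $F(a,b,c;z)=F(b,a,c;z)$ makes this hypergeometric identical in the two sub-contributions; only the trigonometric prefactors differ, and their combination with $-1/\sin(\pi ir)$ collapses via a standard sum-to-product identity into a single $\cos(\pi s/2)$, yielding $h_2(\omega;s;r)$ as in \eqref{h2def}.

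The main obstacle is the careful bookkeeping of signs, trigonometric factors, Gamma factors and powers of $2l$ in the two applications of 6.699(2), combined with the simplification of the prefactor $1/\cos(\pi(1/2+ir))$, so that the two sub-results slot into the exact normalisations \eqref{eq:hwsr} and \eqref{h2def}. The analytic continuation step and the verification that each constituent is holomorphic in $0<\Re s<3/2$---using in particular that $\omega$ is compactly supported in $(0,\infty)$, so each piece is in fact an entire function of $s$ up to harmless Gamma and sine factors---are routine once the explicit formulas are in place.
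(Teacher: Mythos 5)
Your proposal is correct and follows essentially the same route as the paper: the paper likewise substitutes \eqref{eq:f2} and \eqref{k0def} into \eqref{eq:h}, splits the outer integration at $y=2l$, evaluates the inner Bessel integrals by \cite[Eq.~6.699(2)]{GR} in the strip $0<\Re{s}<3/2$, identifies the $J_{2ir}$ and $-J_{-2ir}$ tail contributions as $h_1(\omega;s;\pm r)$, and reduces the $y<2l$ piece to \eqref{h2def} via the reflection and duplication formulas together with the difference-of-sines identity you invoke. The only detail worth making explicit in a write-up is that passing from the Gamma ratios $\Gamma(s/2\pm ir)/\Gamma(1-s/2\pm ir)$ produced by 6.699(2) to the symmetric product $\Gamma(s/2+ir)\Gamma(s/2-ir)$ requires the reflection formula before the trigonometric collapse to $\cos(\pi s/2)$.
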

\begin{proof}

According to \eqref{eq:f2} and \eqref{eq:h} we have
\begin{equation*}
h(\omega;s;r)=\frac{2\pi^{1/2}}{(4l)^s}
\int_{0}^{\infty}\omega(y)
\int_{0}^{\infty}k_0(x,ir)x^{s-1}
\cos\left(\frac{xy}{2l}\right)dxdy,
\end{equation*}
where $k_0(x,v)$ is the $K$-Bessel kernel.
Using \eqref{k0def}, we obtain \eqref{eq:hh1h2} with
\begin{equation*}
h_1(\omega;s;r)=\frac{2\pi^{1/2}}{2(4l)^s\cos(\pi/2+\pi ir)}
\int_{2l}^{\infty}\omega(y)
\int_{0}^{\infty}J_{2ir}(x)x^{s-1}
\cos\left(\frac{xy}{2l}\right)dxdy,
\end{equation*}

\begin{multline*}
h_2(\omega;s;r)=\frac{2\pi^{1/2}}{2(4l)^s\cos(\pi/2+\pi ir)}
\int_{0}^{2l}\omega(y)
\int_{0}^{\infty}\left(J_{2ir}(x)-J_{-2ir}(x)\right)\\\times
x^{s-1}\cos\left(\frac{xy}{2l}\right)dxdy.
\end{multline*}

Let us consider the function $h_2(\omega;s;r).$  Applying \cite[Eq.~6.699(2)]{GR}, we obtain for $0<\Re{s}<3/2$
\begin{multline*}
h_2(\omega;s;r)=\frac{-2^{s-1}\pi^{1/2}}{(4l)^s\sin(\pi ir)}
\int_{0}^{2l}\omega(y)F\left(\frac{s}{2}+ir,\frac{s}{2}-ir,\frac{1}{2};\frac{y^2}{4l^2}\right)
\\\times
\left(\frac{\Gamma(s/2+ir)}{\Gamma(1-s/2+ir)}-\frac{\Gamma(s/2-ir)}{\Gamma(1-s/2-ir)}\right)
dy.
\end{multline*}
Using \cite[Eq. 5.5.3]{HMF} for $\Gamma(1-s/2\pm ir)$ and \cite[Eq. 5.4.6]{HMF}, we prove \eqref{h2def}.

Let us consider the function $h_1(\omega;s;r).$ Using \cite[Eq.~6.699(2)]{GR}, we have for $0<\Re{s}<3/2$
\begin{multline*}
h_1(\omega;s;r)=\frac{\pi^{1/2}2^{-2ir}}{(4l)^s}
\int_{2l}^{\infty}\omega(y)F\left(\frac{s}{2}+ir,\frac{1+s}{2}+ir,1+2ir;\frac{4l^2}{y^2}\right)
\\\times
\frac{\sin{\pi(s/2-1/2+ir)}}{\sin{(\pi i r)}}\left(\frac{y}{2l}\right)^{-s-2ir}
\frac{\Gamma(s+2ir)}{\Gamma(1+2ir)}dy.
\end{multline*}
Applying \cite[Eq. 5.5.5]{HMF} for $\Gamma(s+2ir)$,  we obtain \eqref{eq:hwsr}.

\end{proof}
\begin{rem}
It is possible to obtain other representations for the test functions $h(\omega;s;r)$ and $g(\omega;s;k).$ For example, instead of using Lemma \ref{Kuznetsov} one can apply a version of the Kuznetsov formula proved by Chamizo and Raboso in  \cite{CR}. This approach might yield simpler expressions that are based solely on the Fourier type integrals.
\end{rem}

\begin{cor}\label{cor:h11}
Assume that $l=1$. Then
\begin{equation}\label{eq:hatone}
 h(\omega;1;r)=\sqrt{\pi}\int_{0}^{\infty}\omega(2\cosh{\xi})\cos{(2r\xi)}d\xi,
\end{equation}
\begin{equation}\label{eq:gatone}
g(\omega;1;k)=\pi^{1/2}\int_{2}^{\infty}\frac{\omega(x)}{\sqrt{x^2-4}}\frac{2^{2k-1}}{(x+\sqrt{x^2-4})^{2k-1}}dx.
\end{equation}
\end{cor}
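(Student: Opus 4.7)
The plan is to specialize Lemmas \ref{lem:g} and \ref{lem:hcont} at $s=1$, $l=1$ and simplify. In both decompositions the short-interval pieces $g_2$ and $h_2$ carry an overall factor $\cos(\pi s/2)$, which vanishes at $s=1$; hence $g(\omega;1;k) = g_1(\omega;1;k)$ and $h(\omega;1;r) = h_1(\omega;1;r) + h_1(\omega;1;-r)$, each an integral over $[2,\infty)$ weighted by a $_2F_1$.

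The key analytic step is the classical quadratic transformation of Gauss,
\[
F\!\left(a, a + \tfrac12, 2a; z\right) = \frac{2^{2a-1}}{(1+\sqrt{1-z})^{2a-1}\sqrt{1-z}},
\]
which I would apply at $z = 4/x^2$, where $1 + \sqrt{1-z} = (x+\sqrt{x^2-4})/x$. Taking $a = k$ together with the Legendre duplication identity $\Gamma(k)\Gamma(k+1/2) = \sqrt\pi\,2^{1-2k}\Gamma(2k)$ transforms $g_1(\omega;1;k)$ directly into \eqref{eq:gatone}. Taking $a = 1/2 + ir$ (valid once one observes that $\sin(\pi(s/2-1/2+ir))/\sin(\pi ir) = 1$ at $s=1$) and using the duplication identity $\Gamma(1+ir)\Gamma(1/2+ir) = \sqrt\pi\,2^{-2ir}\Gamma(1+2ir)$, bookkeeping of the powers of $2$ and $x$ collapses $h_1(\omega;1;r)$ to
\[
h_1(\omega;1;r) = \frac{\sqrt\pi}{2}\int_2^\infty \omega(x)\left(\frac{x+\sqrt{x^2-4}}{2}\right)^{-2ir}\frac{dx}{\sqrt{x^2-4}}.
\]
Adding the $r \mapsto -r$ counterpart converts the exponential into $2\cos\!\bigl(2r\log((x+\sqrt{x^2-4})/2)\bigr)$, and the substitution $x = 2\cosh\xi$ (under which $(x+\sqrt{x^2-4})/2 = e^\xi$ and $dx/\sqrt{x^2-4} = d\xi$) yields \eqref{eq:hatone}.

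The only non-routine ingredient is the quadratic hypergeometric identity itself; everything else is standard gamma-function arithmetic and a classical substitution, so I do not anticipate any serious obstacle. As a cross-check one could bypass Lemmas \ref{lem:g}, \ref{lem:hcont} and instead evaluate the inner integrals $\int_0^\infty J_{2k-1}(x)\cos(xy/2)\,dx$ and $\int_0^\infty k_0(x,ir)\cos(xy/2)\,dx$ directly from Gradshteyn--Ryzhik 6.671: on $y > 2$ these are elementary and match the right-hand sides of \eqref{eq:gatone} and \eqref{eq:hatone}, providing independent confirmation.
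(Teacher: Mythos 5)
Your proof is correct and follows essentially the same route as the paper's: specialize Lemmas \ref{lem:g} and \ref{lem:hcont} at $s=l=1$, apply the quadratic transformation $F(a,a+\tfrac12;2a;z)$ (the paper's cited \cite[Eq.~15.4.18]{HMF}) together with the Legendre duplication formula, and finish with the substitution $x=2\cosh\xi$. The only cosmetic difference is that you make explicit the vanishing of $g_2$ and $h_2$ through the $\cos(\pi s/2)$ factor, which the paper leaves implicit by working only with $h_1$ and $g_1$.
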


\begin{proof}
Specializing equation \eqref{eq:hwsr} to $s=l=1$ and using \cite[Eqs.~5.5.5,~15.4.18]{HMF}, namely
\begin{equation*}
\Gamma(1+2ir)=\pi^{-1/2}2^{2ir}\Gamma(1/2+ir)\Gamma(1+ir),
\end{equation*}
\begin{equation*}
F\left( \frac{1}{2}+ir,1+ir,1+2ir;\frac{4}{x^2}\right)=\left(1-\frac{4}{x^2}\right)^{-1/2}
\left(\frac{1}{2}+\frac{1}{2} \left(1-\frac{4}{x^2}\right)^{1/2}\right)^{-2ir},
\end{equation*}
 we obtain
\begin{equation*}
h(\omega;1;r)=\pi^{1/2}\int_{2}^{\infty}\frac{\omega(x)}{\sqrt{x^2-4}}\cos{\left(2r\log{\left(\frac{x}{2}+
\frac{\sqrt{x^2-4}}{2} \right)}\right)}dx.
\end{equation*}
It follows from \cite[Eq.~4.37.19]{HMF} that
\begin{equation*}
\log{\left(\frac{x}{2}+
\frac{\sqrt{x^2-4}}{2} \right)}=\arcosh{\frac{x}{2}}.
\end{equation*}
Substituting this into the integral and making the change of variables $$\xi:=\arcosh{\frac{x}{2}},$$
we obtain \eqref{eq:hatone}. The proof of \eqref{eq:gatone} is similar.
\end{proof}

\begin{cor}\label{cor:h2i}
For $\Re{s}\le 1$ we have
\begin{multline*}
h\left(\omega;s;\frac{s-1}{2i}\right)=\frac{\Gamma(s-1/2)}{(2l)^{1-s}}\sin{\frac{\pi s}{2}}\int_{2l}^{\infty}
\omega(x)(x^2-4l^2)^{1/2-s}dx\\+\frac{\Gamma(s-1/2)}{(2l)^{1-s}}\cos{\frac{\pi s}{2}}\int_{0}^{2l}
\omega(x)(4l^2-x^2)^{1/2-s}dx.
\end{multline*}
\end{cor}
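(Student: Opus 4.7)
The plan is to substitute $r=(s-1)/(2i)$ directly into the decomposition \eqref{eq:hh1h2} of $h(\omega;s;r)$ provided by Lemma \ref{lem:hcont}, and to evaluate each of the three pieces $h_1(\omega;s;r)$, $h_1(\omega;s;-r)$, $h_2(\omega;s;r)$ separately. The restriction $\Re s\le 1$ keeps us inside the strip $0<\Re s<3/2$ where \eqref{eq:hh1h2} is valid, so the substitution is legitimate.

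First I would handle $h_2$. With $ir=(s-1)/2$, the hypergeometric parameters in \eqref{h2def} collapse to $F(s-1/2,\,1/2,\,1/2;x^2/(4l^2))$, which by the degenerate case $F(a,b,b;z)=(1-z)^{-a}$ equals $(2l)^{2s-1}(4l^2-x^2)^{1/2-s}$. The Gamma factors reduce as $\Gamma(s/2+ir)\Gamma(s/2-ir)/\Gamma(1/2)=\Gamma(s-1/2)$, giving exactly the second term claimed in the corollary.

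Next I would treat $h_1(\omega;s;(s-1)/(2i))$. Here the parameters in \eqref{eq:hwsr} become $F(s-1/2,s,s;4l^2/x^2)=(1-4l^2/x^2)^{-(s-1/2)}=x^{2s-1}(x^2-4l^2)^{1/2-s}$. The sine ratio $\sin\pi(s/2-1/2+ir)/\sin(\pi ir)=-\sin(\pi s)/(-\cos(\pi s/2))=2\sin(\pi s/2)$, and the Gamma ratio $\Gamma(1/2+s/2+ir)\Gamma(s/2+ir)/\Gamma(1+2ir)=\Gamma(s)\Gamma(s-1/2)/\Gamma(s)=\Gamma(s-1/2)$. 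Combining with the remaining factor $(x/(2l))^{-2ir}=(x/(2l))^{1-s}$ and the prefactor $1/(2x^s)$ yields precisely the first term in the corollary.

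The key observation that makes the statement this clean is that $h_1(\omega;s;-(s-1)/(2i))$ vanishes identically: at $r=-(s-1)/(2i)$ we have $ir=(1-s)/2$, hence $s/2-1/2+ir=0$, so $\sin\pi(s/2-1/2+ir)=0$ while $\sin(\pi ir)=\cos(\pi s/2)\neq 0$ and the Gamma factors are finite. Thus only $h_1(\omega;s;(s-1)/(2i))$ and $h_2(\omega;s;(s-1)/(2i))$ contribute, and adding them proves the corollary. The only subtle point is checking that no factor in $h_1(\omega;s;-(s-1)/(2i))$ develops a singularity that could cancel the zero; since $\Re s\le 1$ keeps all relevant gamma arguments away from the nonpositive integers, this is routine.
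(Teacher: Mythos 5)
Your proposal is correct and follows essentially the same route as the paper: specialize $r=(s-1)/(2i)$ in the decomposition \eqref{eq:hh1h2} of Lemma \ref{lem:hcont} and collapse the hypergeometric functions via $F(a,b,b;z)=(1-z)^{-a}$ (that is, \cite[Eq.~15.4.6]{HMF}). Your explicit verification that $h_1\left(\omega;s;-\frac{s-1}{2i}\right)$ vanishes because $\sin\pi(s/2-1/2+ir)=0$ there, with no compensating singularity, is a point the paper leaves implicit, and is a welcome addition.
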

\begin{proof}
Specializing equation \eqref{eq:hh1h2}  to $r=(s-1)/(2i)$ gives
\begin{multline*}
h\left(\omega;s;\frac{s-1}{2i}\right)=\\ \int_{2l}^{\infty}
\frac{\omega(x)}{x^s}\cos{\frac{\pi(s-1)}{2}}\Gamma(s-1/2)\left(\frac{x}{2l} \right)^{1-s}
F\left(s-\frac{1}{2},s,s;\frac{4l^2}{x^2} \right)dx+\\ \int_{0}^{2l}
\frac{\omega(x)}{(2l)^s}\cos{\frac{\pi s}{2}}\Gamma(s-1/2)
F\left(s-\frac{1}{2},1/2,1/2;\frac{x^2}{4l^2} \right)dx.
\end{multline*}
Applying \cite[Eq.~15.4.6]{HMF} to compute the hypergeometric functions yields the lemma.
\end{proof}

\begin{lem}\label{lem:inth} Let $\omega(2)=0$. Then
\begin{equation*}
\int_{-\infty}^{\infty}h(\omega;1;r)dr=0.
\end{equation*}
\end{lem}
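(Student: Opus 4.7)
The plan is to reduce the integral identity to Fourier inversion on an even, smooth, compactly supported function, using Corollary \ref{cor:h11} to get a clean representation at $s=1$ (with $l=1$, which is implicit since the hypothesis is $\omega(2)=0$ rather than $\omega(2l)=0$).

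First I would invoke Corollary \ref{cor:h11} to write
\begin{equation*}
h(\omega;1;r)=\sqrt{\pi}\int_{0}^{\infty}\omega(2\cosh\xi)\cos(2r\xi)\,d\xi.
\end{equation*}
Set $F(\xi):=\omega(2\cosh\xi)$. Because $\cosh$ is even and $\omega$ is smooth and compactly supported on $[a_1,a_2]$, the function $F$ extends to an even, smooth, compactly supported function on $\mathbb{R}$ (supported where $|\xi|\le\operatorname{arcosh}(a_2/2)$). Evenness of $F$ lets me rewrite
\begin{equation*}
h(\omega;1;r)=\frac{\sqrt{\pi}}{2}\int_{-\infty}^{\infty}F(\xi)\cos(2r\xi)\,d\xi
=\frac{\sqrt{\pi}}{2}\,\widehat{F}(2r),
\end{equation*}
where $\widehat{F}(\rho):=\int_{\mathbb{R}}F(\xi)e^{-i\rho\xi}d\xi$ is the standard Fourier transform.

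Next I would integrate in $r$. Since $F$ is Schwartz, $\widehat{F}$ is Schwartz and hence absolutely integrable, so the following change of variable and application of Fourier inversion are fully rigorous:
\begin{equation*}
\int_{-\infty}^{\infty}h(\omega;1;r)\,dr
=\frac{\sqrt{\pi}}{2}\int_{-\infty}^{\infty}\widehat{F}(2r)\,dr
=\frac{\sqrt{\pi}}{4}\int_{-\infty}^{\infty}\widehat{F}(\rho)\,d\rho
=\frac{\pi^{3/2}}{2}\,F(0).
\end{equation*}
Finally, $F(0)=\omega(2\cosh 0)=\omega(2)$, which vanishes by hypothesis, yielding the claim.

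The argument is essentially a one-line Fourier inversion once the right representation is in hand, so there is no serious obstacle; the only care needed is in justifying the interchange of integrals, which is automatic from the Schwartz property of $\widehat{F}$ (equivalently, from the smoothness and compact support of $F$). No additional results beyond Corollary \ref{cor:h11} and standard Fourier analysis are required.
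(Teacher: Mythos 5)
Your proposal is correct and follows the paper's own proof: the paper likewise starts from the representation in Corollary \ref{cor:h11} and applies the cosine Fourier inversion formula to obtain $\int_{-\infty}^{\infty}h(\omega;1;r)\,dr=\frac{\pi^{3/2}}{2}\omega(2)=0$. Your version merely spells out the even extension and the interchange of integrals that the paper leaves implicit, and the constant matches.
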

\begin{proof}
Using the inversion formula for the cosine Fourier transform \cite[Eqs. 1.14.9, 1.14.11]{HMF}, we obtain
\begin{equation*}
\int_{-\infty}^{\infty}h(\omega;1;r)dr=\frac{\pi^{3/2}}{2}\omega(2)=0.
\end{equation*}
\end{proof}

\section{Analytic continuation}
\subsection{Convolution formula at the point $s=1$}

The analytic continuation of the convolution formula to the region containing the special point $s=1$ is quite delicate because of the presence of $\zeta(s)$ in the continuous spectrum \eqref{eq:zc}. In particular, it is required to show that the integral in \eqref{eq:zc} vanishes at $s=1$. With this goal, we let $l=1$ and use Corollary \ref{cor:h11}. As a result, we establish the explicit formula given by Theorem \ref{thm:at1}, relating norms of prime geodesics to moments of symmetric-square $L$-functions.
In order to prove Theorem \ref{thm:at1}, it is required to continue analytically $Z_C(s)$ given by \eqref{eq:zc}.

\begin{lem}\label{lem:contcspec}
Let $\omega(x)=0$ for $x\leq 2.$ We have
\begin{multline*}
\lim_{s \rightarrow 1}\frac{\zeta(s)}{\pi^{1/2+s}}\int_{-\infty}^{\infty}
\frac{\left|\zeta(s+2ir)\right|^2}{\left|\zeta(1+2ir\right)|^2}h(\omega;s;r)dr=\\
-\frac{1}{2\sqrt{\pi}}h(\omega;1;0)
+\frac{1}{\pi^{3/2}}\int_{-\infty}^{+\infty}\frac{\partial}{\partial s}h\left( \omega;s;r\right)\biggr|_{1}dr
\\+\frac{1}{\pi^{3/2}}\int_{-\infty}^{+\infty}\biggl(
\frac{\zeta'(1+2ir)}{\zeta(1+2ir)} +\frac{\zeta'(1-2ir)}{\zeta(1-2ir)}\biggr)h(\omega;1;r)dr.
\end{multline*}
\end{lem}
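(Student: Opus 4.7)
The plan is to isolate the singular ``Dirac-like'' behaviour at $r=0$ from a jointly smooth piece. Introduce the entire function $\tilde\zeta(z):=z\zeta(1+z)$, which satisfies $\tilde\zeta(0)=1$ and $\tilde\zeta(\pm 2ir)\neq 0$ for all $r\in\R$ (since $\zeta(1+it)\neq 0$ for $t\neq 0$). Dividing numerator and denominator by the appropriate simple poles gives the identity
\begin{equation*}
\frac{\zeta(s+2ir)\zeta(s-2ir)}{|\zeta(1+2ir)|^2}=T(r,s)\cdot\frac{4r^2}{(s-1)^2+4r^2},
\end{equation*}
where $T(r,s):=\tilde\zeta(s-1+2ir)\tilde\zeta(s-1-2ir)/\bigl(\tilde\zeta(2ir)\tilde\zeta(-2ir)\bigr)$ is jointly smooth in $(r,s)$ in a neighbourhood of $\R\times\{1\}$ and satisfies $T(r,1)\equiv 1$.

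Splitting $4r^2/((s-1)^2+4r^2)=1-(s-1)^2/((s-1)^2+4r^2)$ decomposes the integral in \eqref{eq:zc} as $I_1(s)-I_2(s)$ with
\begin{equation*}
I_1(s):=\int_{-\infty}^{\infty}T(r,s)\,h(\omega;s;r)\,dr,\qquad I_2(s):=(s-1)^2\int_{-\infty}^{\infty}\frac{T(r,s)\,h(\omega;s;r)}{(s-1)^2+4r^2}\,dr.
\end{equation*}
Since $T(r,1)\equiv 1$ and $\int_{\R}h(\omega;1;r)\,dr=0$ by Lemma~\ref{lem:inth} (the hypothesis $a_1>2$ gives $\omega(2)=0$), Taylor expansion of $I_1$ about $s=1$ yields
\begin{equation*}
I_1(s)=(s-1)\int_{-\infty}^{\infty}\Bigl[\partial_sT(r,s)\big|_{s=1}h(\omega;1;r)+\partial_sh(\omega;s;r)\big|_{s=1}\Bigr]dr+O\bigl((s-1)^2\bigr).
\end{equation*}
Using $\tilde\zeta'(z)/\tilde\zeta(z)=1/z+\zeta'(1+z)/\zeta(1+z)$, the $\pm 1/(2ir)$ poles cancel and $\partial_sT(r,s)|_{s=1}=\zeta'(1+2ir)/\zeta(1+2ir)+\zeta'(1-2ir)/\zeta(1-2ir)$.

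For $I_2(s)$ the substitution $r=(s-1)t/2$ turns $(s-1)^2/((s-1)^2+4r^2)$ into a Poisson-type kernel $(1+t^2)^{-1}$ with the factor $(s-1)/2$ out front; dominated convergence applied to $(1+t^2)^{-1}T((s-1)t/2,s)h(\omega;s;(s-1)t/2)$ yields
\begin{equation*}
I_2(s)=\frac{\pi(s-1)}{2}T(0,1)h(\omega;1;0)+o(s-1)=\frac{\pi(s-1)}{2}h(\omega;1;0)+o(s-1).
\end{equation*}
Combining and multiplying by $\zeta(s)/\pi^{1/2+s}=1/(\pi^{3/2}(s-1))+O(1)$ gives, upon taking $s\to 1$, exactly the three stated terms, with $\pi/(2\pi^{3/2})=1/(2\sqrt{\pi})$ producing the coefficient of the first (point-mass) contribution.

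The main technical obstacle is the uniform control required by both expansions. For $I_1$, the very reason for the factorisation is that $T$ absorbs the non-uniform behaviour of the original ratio at $r=0$, so the Taylor expansion in $s$ is valid uniformly in $r$ and the $\partial_s$ may be taken inside the integral. For $I_2$, the dominated-convergence step demands rapid decay of $h(\omega;s;r)$ in $r$ uniformly for $s$ near $1$; this follows from the Fourier-type representations in Lemma~\ref{lem:hcont} and Corollary~\ref{cor:h11}, combined with the smoothness and compact support of $\omega$.
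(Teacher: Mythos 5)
Your argument is correct, and it reaches the result by a genuinely different mechanism than the paper. The paper passes to the variable $z=2ir$, deforms the contour around the pole of $\zeta(s+z)$ at $z=1-s$, and extracts the residue term $\pi^{1/2-s}\zeta(2s-1)\zeta(2-s)^{-1}h\left(\omega;s;\frac{1-s}{2i}\right)$, whose limit gives $-\frac{1}{2\sqrt{\pi}}h(\omega;1;0)$; the remaining contour integral $F(s)$ vanishes at $s=1$ by Lemma \ref{lem:inth}, so $\lim\zeta(s)F(s)=F'(1)$ produces the two derivative integrals. You instead factor the ratio of zeta values through the entire function $z\zeta(1+z)$, which cleanly separates a jointly smooth factor $T(r,s)$ (with $T(r,1)\equiv 1$) from the explicit kernel $4r^2/((s-1)^2+4r^2)$; the point mass at $r=0$ then emerges from the approximate identity $(s-1)^2/((s-1)^2+4r^2)$ rather than from a residue, and the derivative terms come from the first-order Taylor expansion of $I_1$, using the same vanishing $\int_{\mathbf{R}}h(\omega;1;r)\,dr=0$. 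Both computations agree term by term (including the disappearance of the $\log\pi$ contribution, which in your version is absorbed into the $O(1)\cdot O(s-1)$ cross term). What your route buys is a purely real-variable argument that avoids the somewhat delicate bookkeeping of which poles the indented contour $C_{\delta}$ does or does not enclose; what the paper's route buys is that the extracted residue $\frac{\zeta(2s-1)}{\zeta(2-s)}h\left(\omega;s;\frac{1-s}{2i}\right)$ is exactly the extra main term needed for the continuation to $\Re{s}<1$ in the next subsection, so the same computation is reused at $s=1/2$, whereas your factorization is tailored to the single point $s=1$. Two points deserve slightly more care in your write-up: the convergence of $\int_{-\infty}^{\infty}\partial_s h(\omega;s;r)|_{s=1}\,dr$ is not immediate and is where the paper spends most of its effort (the $h_3$, $h_4$ decomposition with integration by parts in the Euler integral representation of the hypergeometric function); and for the dominated-convergence step in $I_2$ and the Taylor remainder in $I_1$ you implicitly need the standard lower bound $|\zeta(1+2ir)|\gg\log^{-A}(2+|r|)$ so that $T(r,s)$ and $\partial_sT(r,s)|_{s=1}$ grow at most logarithmically in $r$, which is then beaten by the rapid decay of $h(\omega;s;r)$. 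Neither is an obstacle, but both should be stated.
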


\begin{proof}
 Consider
\begin{equation*}
I(s):=\frac{\zeta(s)}{\pi^{1/2+s}}\int_{-\infty}^{\infty}
\frac{\zeta(s+2ir)\zeta(s-2ir)}{\zeta(1+2ir) \zeta(1-2ir)}h(\omega;s;r)dr.
\end{equation*}
Our goal is to show that the pole of $\zeta(s)$ at $s=1$ is compensated by the vanishing integral (see Lemma \ref{lem:inth}).

Making the change of variables $z:=2ir$, we have
\begin{equation*}
I(s)=\zeta(s)\pi^{1/2-s}\frac{1}{2\pi i}\int_{(0)}
\frac{\zeta(s+z)\zeta(s-z)}{\zeta(1+z) \zeta(1-z)}h\left(\omega;s;\frac{z}{2i}\right)dz.
\end{equation*}
The integrand has poles at $z=\pm( s-1)$ and also at the zeros of $\zeta(1\pm z)$.
By Cauchy's theorem
\begin{multline*}
I(s)=\zeta(s)\pi^{1/2-s}\frac{1}{2\pi i}\int_{C_{\delta}}
\frac{\zeta(s+z)\zeta(s-z)}{\zeta(1+z) \zeta(1-z)}h\left(\omega;s;\frac{z}{2i}\right)dz+\\
\pi^{1/2-s}\frac{\zeta(2s-1)}{\zeta(2-s)}h\left(\omega;s;\frac{1-s}{2i}\right),
\end{multline*}
where the contour of integration is defined as
\begin{equation*}C_{\delta}=\gamma_1\cup \gamma_2 \cup \gamma_3,
\end{equation*}
\begin{equation*}
\gamma_1=(-i\infty,-i\Im{s}-i\delta), \quad \gamma_3=(-i\Im{s}+i\delta,i\infty),
\end{equation*}
and $\gamma_2$ is a semicircle of radius $\delta>0$ with a center at $-i\Im{s}$  for $\Re{z}<0$.
 The parameter $\delta$ is chosen such that all zeros of $\zeta(1+z)$ are located to the left of $C_{\delta}$.

The next step is to compute the limit when $s$ tends to $1$. Evaluating the limit for the second summand, we find
\begin{equation*}
\lim_{s\rightarrow 1}\pi^{1/2-s}\frac{\zeta(2s-1)}{\zeta(2-s)}h\left(\omega;s;\frac{1-s}{2i}\right)=-\frac{1}{2\sqrt{\pi}}h(\omega;1;0).
\end{equation*}
For simplicity, let
\begin{equation*}
F(s):=\pi^{1/2-s}\frac{1}{2\pi i}\int_{C_{\delta}}
\frac{\zeta(s+z)\zeta(s-z)}{\zeta(1+z) \zeta(1-z)}h\left(\omega;s;\frac{z}{2i}\right)dz.
\end{equation*}
Then the limit of the first summand can be found as follows
\begin{multline*}
\lim_{s\rightarrow 1}\zeta(s)F(s)=F'(1)=\frac{1}{2\pi i}\frac{1}{\sqrt{\pi}}\int_{C_{\delta}}\frac{\partial}{\partial s}h\left( \omega;s;\frac{z}{2i}\right)\biggr|_{1}dz\\+
\frac{1}{2\pi i}\frac{1}{\sqrt{\pi}}\int_{C_{\delta}}\left( \frac{\zeta'(1+z)}{\zeta(1+z)}+\frac{\zeta'(1-z)}{\zeta(1-z)}-\log{\pi}\right)h\left( \omega;1;\frac{z}{2i}\right)
dz.
\end{multline*}
Note that the expression
\begin{equation*}
\frac{\zeta'(1+z)}{\zeta(1+z)}+\frac{\zeta'(1-z)}{\zeta(1-z)}
\end{equation*}
is analytic at $z=0$.
Now moving the contour back to $\Re{z}=0$ without crossing any poles and making the change of variables $r:=z/(2i)$, we obtain
\begin{multline*}
\lim_{s\rightarrow 1}\zeta(s)F(s)=\frac{1}{\pi^{3/2}}\int_{-\infty}^{+\infty}\biggl(
\frac{\zeta'(1+2ir)}{\zeta(1+2ir)} +\frac{\zeta'(1-2ir)}{\zeta(1-2ir)}\biggr)h(\omega;1;r)dr\\+
\frac{1}{\pi^{3/2}}\int_{-\infty}^{+\infty}\frac{\partial}{\partial s}h\left( \omega;s;r\right)\biggr|_{1}dr
.
\end{multline*}
Note that the summand with $\log{\pi}$ disappeared because it is identically zero by Lemma \ref{lem:inth}. The convergence of the first integral follows from \eqref{eq:hatone} using the integration by parts.
The last thing to verify is that the integral
\begin{equation*}
\int_{-\infty}^{+\infty}\frac{\partial}{\partial s}h\left( \omega;s;r\right)\biggr|_{1}dr
\end{equation*}
converges. Note that  $h_2(\omega;s;r)=0$ since $\omega(x)=0$ for $x\leq 2$ and $l=1$. Differentiating \eqref{eq:hh1h2}, we obtain
\begin{equation*}
\frac{\partial}{\partial s}h\left( \omega;s;r\right)\biggr|_{1}=h_3(\omega;r)+h_3(\omega;-r)+h_4(\omega;r),
\end{equation*}
where
\begin{multline*}
h_3(\omega;r):=\int_{2l}^{\infty}\frac{\omega(x)}{2x} \left( \frac{x}{2}\right)^{-2ir}\frac{\partial}{\partial s}\Biggl(\Gamma(1/2+s/2+ir)\\ \times \frac{\Gamma(s/2+ir)}{\Gamma(1+2ir)}
F(s/2+ir,s/2+1/2+ir,1+2ir;4l^2/x^2)\Biggr)\biggr|_{1}dx
\end{multline*}
and
\begin{multline*}
h_4(\omega;r):=-\pi^{1/2}\int_{0}^{\infty}\frac{\omega(x)\log{x}}{\sqrt{x^2-4}}\cos{\left(2r\log{\left(\frac{x}{2}+
\frac{\sqrt{x^2-4}}{2} \right)}\right)}dx\\
-\frac{\pi^{3/2}}{2}\frac{\cosh{\pi r}}{\sinh{\pi r}}\int_{0}^{\infty}\frac{\omega(x)}{\sqrt{x^2-4}}\sin{\left(2r\log{\left(\frac{x}{2}
\frac{\sqrt{x^2-4}}{2} \right)}\right)}dx.
\end{multline*}
Integrating by parts, one shows that all integrals in $h_4(\omega;r)$ are of rapid decay.
Now consider $h_3(\omega;r)$. Using \cite[Eq.~15.6.1]{HMF}, we obtain
\begin{multline*}
\Gamma(1/2+s/2+ir)\frac{\Gamma(s/2+ir)}{\Gamma(1+2ir)}
F(s/2+ir,s/2+1/2+ir,1+2ir;4l^2/x^2)\\= \frac{\Gamma(1/2+s/2+ir)}{\Gamma(1-s/2+ir)}\int_{0}^{1}
\frac{y^{s/2-1+ir}(1-y)^{-s/2+ir}}{(1-4y/x^2)^{1/2+s/2+ir}}.
\end{multline*}
Differentiating the above with respect to $s$ at the point $1$, changing the order of integration over $x$ and $y$, computing the integral over $x$ by parts, we prove that $h_3(\omega;r)$ is also of rapid decay. This completes the proof of Lemma \ref{lem:contcspec}.

\end{proof}

Now Theorem \ref{thm:at1} follows from Theorem \ref{thm:convform} and Lemma \ref{lem:contcspec}. For this, it is required to show the convergence of all series and integrals in
$Z_H(1)$ and $Z_D(1)$. In the case of $Z_H(1)$, this  is a consequence of \eqref{eq:gatone}.
In the case of $Z_D(1)$, the convergence follows from \eqref{eq:hatone} using the integration by parts formula.

\subsection{Convolution formula at the point $s=1/2$}
Next, we derive a convolution formula at the special point $s=1/2$. With this goal, we first continue the formula to the region $\Re{s}<1$, $s\neq 1/2$.
\begin{thm} Assume that $\Re{s}<1$, $s\neq 1/2$ and $\omega(2l)=0$. Then
\begin{multline}\label{eq:cont2}
\sum_{n=1}^{\infty}\omega(n)\mathscr{L}_{n^2-4l^2}(s)=\widetilde{\omega}(1)\sigma_{-s}(l^2)\frac{\zeta(2s)}{\zeta(1+s)}\\+\frac{2^s\cos{(\pi s/2)}}{\pi^{s-1/2}}\sigma_{s-1}(l^2)\frac{\zeta(2s-1)}{\zeta(2-s)}\Gamma(s-1/2)\int_{0}^{2l}\omega(x)(4l^2-x^2)^{1/2-s}dx\\+\frac{2^s\sin{(\pi s/2)}}{\pi^{s-1/2}}\sigma_{s-1} (l^2)\frac{\zeta(2s-1)}{\zeta(2-s)}\Gamma(s-1/2)\int_{2l}^{\infty}\omega(x)(x^2-4l^2)^{1/2-s}dx\\
+Z_C(s)+Z_H(s)+ Z_D(s),
\end{multline}
where  $Z_C(s)$, $Z_H(s)$, $Z_D(s)$ are defined by \eqref{eq:zc}, \eqref{eq:zh} and \eqref{eq:zd}.
\end{thm}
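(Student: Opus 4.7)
The plan is to analytically continue the identity \eqref{eq:spectrdecomp1} from $\Re s > 1$, where Theorem \ref{thm:convform} is already proved, to the region $\Re s < 1$, $s \neq 1/2$. The hypothesis $\omega(2l)=0$ is used to kill the (single) summand with $n = 2l$, for which $\mathscr{L}_{n^2-4l^2}(s) = \zeta(2s-1)$ would otherwise contribute a pole at $s=1$; consequently the left-hand side is an entire function of $s$, and the right-hand side must extend accordingly. Of the four pieces on the right of \eqref{eq:spectrdecomp1}, the main term $MT(s)$ is explicitly meromorphic and poses no problem (apart from the allowed pole at $s=1/2$), and the spectral series $Z_H(s)$ and $Z_D(s)$ extend to $\Re s<1$ once one verifies convergence using \eqref{eq:estsym2} together with the representations of $g(\omega;s;k)$ and $h(\omega;s;\kappa_j)$ given in Lemmas \ref{lem:g}--\ref{lem:hcont}. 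All the real work is in continuing the continuous-spectrum term $Z_C(s)$.

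For this I would substitute $z=2ir$ in \eqref{eq:zc} and view the integral as running over the imaginary $z$-axis. The integrand then has simple poles at $z=1-s$ (from $\zeta(s+z)$) and $z=s-1$ (from $\zeta(s-z)$), which for $\Re s>1$ lie strictly to the left and right of the contour respectively, and which cross the imaginary axis simultaneously, in opposite directions, as $\Re s$ decreases through $1$. The analytic continuation of $Z_C(s)$ is by definition the integral taken along a contour which keeps each pole on its original side; writing this deformed integral as the undeformed real-axis integral \eqref{eq:zc} plus two small closed-loop contributions reduces the problem to computing the residues at $z=\pm(1-s)$ together with the orientations of the two loops.

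Both residues are routine to evaluate. By evenness of the kernel $k_0(x,v)$ in $v$ (visible from \eqref{k0def}) and hence of $h(\omega;s;r)$ in $r$, and by the classical identity $\sigma_a(l^2)=l^{2a}\sigma_{-a}(l^2)$, the two residue values agree up to a single overall sign coming from $\tfrac{d}{dz}(s\pm z)=\pm 1$; this algebraic sign then combines with the \emph{geometric} sign of the contour orientation at each pole (also opposite between the two poles, because they cross the contour in opposite directions) to produce an additive rather than cancelling contribution. This is the main place where it is easy to slip: if either sign is ignored, the two residues appear to cancel and one obtains no correction at all. Keeping track of everything, the total correction works out to
\begin{equation*}
2\pi^{1/2-s}\,l^{1-s}\sigma_{s-1}(l^2)\frac{\zeta(2s-1)}{\zeta(2-s)}\,h\!\left(\omega;s;\frac{s-1}{2i}\right),
\end{equation*}
and the outer factor $\zeta(s)$ in \eqref{eq:zc} is absorbed by the $1/\zeta(s)$ generated by $\zeta(1-z)$ at $z=1-s$, so the correction is in fact regular at $s=1$. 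Expanding $h(\omega;s;(s-1)/(2i))$ via Corollary \ref{cor:h2i} splits this correction into precisely the two integrals over $[0,2l]$ and $[2l,\infty)$ with coefficients $\cos(\pi s/2)$ and $\sin(\pi s/2)$ as they appear in \eqref{eq:cont2}, which completes the proof.
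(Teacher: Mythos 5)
Your proposal is correct and follows essentially the same route as the paper: the only nontrivial step is the continuation of $Z_C(s)$, carried out by deforming the $z=2ir$ contour around the poles at $z=\pm(1-s)$ and collecting the two residues, which add rather than cancel thanks to the evenness of $h(\omega;s;r)$ in $r$ and the identity $\sigma_a(l^2)=l^{2a}\sigma_{-a}(l^2)$, after which Corollary \ref{cor:h2i} produces the two explicit integrals. Your total correction $2\pi^{1/2-s}l^{1-s}\sigma_{s-1}(l^2)\frac{\zeta(2s-1)}{\zeta(2-s)}h\left(\omega;s;\frac{s-1}{2i}\right)$ is exactly the term the paper obtains.
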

\begin{proof}

In order to obtain the analytic continuation to the region $\Re{s}<1$, we only need to consider $Z_C(s)$.
Therefore, it is required to prove the analytic continuation of the following integral (see \eqref{eq:zc})
\begin{equation*}
\zeta(s)\pi^{1/2-s}\frac{1}{2\pi i}\int_{(0)}\frac{\sigma_{z}(l^2)}{l^{z}}
\frac{\zeta(s+z)\zeta(s-z)}{\zeta(1+z) \zeta(1-z)}h\left(\omega;s;\frac{z}{2i}\right)dz.
\end{equation*}
The integrand has poles at $z=\pm( s-1)$ and also at the zeros of $\zeta(1\pm z)$.

Thus taking sufficiently small $\delta>0$ we can change the contour of integration  without crossing any poles to
\begin{equation*}C_{\delta}=\gamma_1\cup \gamma_2 \cup \gamma_3\cup \gamma_4\cup \gamma_5,
\end{equation*}
\begin{equation*}
\gamma_1=(-i\infty,-i\Im{s}-i\delta), \quad \gamma_3=(-i\Im{s}+i\delta,i\Im{s}-i\delta),\quad
\gamma_5=(i\Im{s}+i\delta,i\infty),
\end{equation*}
and $\gamma_2$ is a semicircle of radius $\delta>0$ with a center at $-i\Im{s}$  for $\Re{z}>0$,
$\gamma_4$ is a semicircle of radius $\delta>0$ with a center at $i\Im{s}$  for $\Re{z}<0$.
This yields the analytic continuation of $Z_C(s)$ to the region $\Re{s}>1-\delta$.
Now changing the contour of integration back to the line $\Re{z}=0$ and applying
Cauchy's theorem, we get
\begin{multline*}
\zeta(s)\pi^{1/2-s}\frac{1}{2\pi i}\int_{(0)}\frac{\sigma_{z}(l^2)}{l^{z}}
\frac{\zeta(s+z)\zeta(s-z)}{\zeta(1+z) \zeta(1-z)}h\left(\omega;s;\frac{z}{2i}\right)dz+\\
2\pi^{1/2-s}\frac{\sigma_{s-1}(l^2)}{l^{s-1}}\frac{\zeta(2s-1)}{\zeta(2-s)}h\left(\omega;s;\frac{s-1}{2i}\right).
\end{multline*}
Note that we used the fact that $h(\omega;s;r)$ is the even function in $r$, which is  a consequence of Lemma \ref{lem:hcont}.
This yields the analytic continuation to $\Re{s}<1$. Using Corollary \ref{cor:h2i}, the assertion follows.

\end{proof}

Now we extend the region to $s=1/2$ by letting $s=1/2+u$ and computing the limit as $u$ tends to zero.

\begin{thm} \label{thm:at12} Assume that $\omega(2l)=0$. Then
\begin{multline*}
\sum_{n=1}^{\infty}\omega(n)\mathscr{L}_{n^2-4l^2}(1/2)=\frac{\sigma_{-1/2}(l^2)}{2\zeta(3/2)}
\int_{0}^{\infty}\omega(x)\Biggl(
\log{|x^2-4l^2|}-\\-\frac{\pi}{2}\sgn(x-2l)+3\gamma-2\frac{\zeta'(3/2)}{\zeta(3/2)}-\log{8\pi}-
\frac{2}{\sigma_{-1/2}(l^2)}\sum_{d|l^2}d^{-1/2}\log{d}
\Biggr)\\
+Z_C(1/2)+Z_H(1/2)+ Z_D(1/2),
\end{multline*}
where  $Z_C(s)$, $Z_H(s)$, $Z_D(s)$ are defined by \eqref{eq:zc}, \eqref{eq:zh} and \eqref{eq:zd}.
\end{thm}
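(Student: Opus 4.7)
The plan is to specialise \eqref{eq:cont2} to $s=1/2+u$ and let $u\to 0$. First I would check that the spectral pieces $Z_C,Z_H,Z_D$ are holomorphic at $s=1/2$. The factor $\zeta(s)$ in \eqref{eq:zc} is regular since $\zeta(1/2)\neq 0$, and the double zero of $|\zeta(1+2ir)|^{-2}$ at $r=0$ keeps the integrand in \eqref{eq:zc} locally bounded there; the symmetric-square $L$-functions in \eqref{eq:zh} and \eqref{eq:zd} are entire; and the weight functions $g(\omega;s;k)$, $h(\omega;s;r)$ are holomorphic on $0<\Re{s}<3/2$ by Lemmas \ref{lem:g} and \ref{lem:hcont}. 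Thus these pieces contribute simply $Z_C(1/2)+Z_H(1/2)+Z_D(1/2)$, and the entire content of the limit lies in the three ``elementary'' summands of \eqref{eq:cont2}.

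I would then group the elementary part as $T_0(s)+P(s)Q(s)$, where
\begin{equation*}
T_0(s)=\widetilde{\omega}(1)\sigma_{-s}(l^2)\frac{\zeta(2s)}{\zeta(1+s)},\qquad P(s)=\frac{2^s\,\Gamma(s-1/2)\,\zeta(2s-1)\,\sigma_{s-1}(l^2)}{\pi^{s-1/2}\,\zeta(2-s)},
\end{equation*}
\begin{equation*}
Q(s)=\cos(\pi s/2)\int_{0}^{2l}\omega(x)(4l^2-x^2)^{1/2-s}dx+\sin(\pi s/2)\int_{2l}^{\infty}\omega(x)(x^2-4l^2)^{1/2-s}dx.
\end{equation*}
At $u=0$, $T_0$ acquires a simple pole from $\zeta(1+2u)=\tfrac{1}{2u}+\gamma+O(u)$ and $P$ acquires one from $\Gamma(u)=\tfrac{1}{u}-\gamma+O(u)$. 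Using $\zeta(0)=-\tfrac12$ I would show $\operatorname{Res}_{u=0}P(1/2+u)=-\sigma_{-1/2}(l^2)/(\sqrt{2}\,\zeta(3/2))$ and $Q(1/2)=\tfrac{\sqrt{2}}{2}\widetilde{\omega}(1)$, so the two residues equal $\pm\widetilde{\omega}(1)\sigma_{-1/2}(l^2)/(2\zeta(3/2))$ and cancel, guaranteeing that the limit exists.

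Next I would extract the $u^{0}$-coefficient. The required Taylor data are the standard expansions of $\sigma_{-1/2\pm u}(l^2)$, $\zeta(3/2\pm u)^{-1}$, $\cos(\pi/4+\pi u/2)$ and $\sin(\pi/4+\pi u/2)$, together with $\zeta(2u)=-\tfrac12-u\log 2\pi+O(u^2)$ (from $\zeta'(0)=-\tfrac12\log 2\pi$) and
\begin{equation*}
(4l^2-x^2)^{-u}=1-u\log(4l^2-x^2)+O(u^2),\qquad (x^2-4l^2)^{-u}=1-u\log(x^2-4l^2)+O(u^2).
\end{equation*}
These produce the symmetric combination $\int_{0}^{\infty}\omega(x)\log|x^2-4l^2|\,dx$, while the $\cos$--$\sin$ asymmetry produces $\tfrac{\pi}{2}\int_{0}^{\infty}\omega(x)\sgn(x-2l)\,dx$. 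Summing the finite parts from $T_0$ and from $PQ$, the contributions of $\log 2$ from $2^s$, $\log\pi$ from $\pi^{s-1/2}$, and $\log 2\pi$ from $\zeta(0)$ combine into $-\log 8\pi$; the two copies of $\zeta'(3/2)/\zeta(3/2)$ combine into $-2\zeta'(3/2)/\zeta(3/2)$; the two copies of the divisor logarithm combine into $-2\sigma_{-1/2}(l^2)^{-1}\sum_{d|l^2}d^{-1/2}\log d$; and the $\gamma$ contributions from $\zeta(1+2u)$ and $\Gamma(u)$ combine into $3\gamma$. Assembling everything reproduces the bracket in the theorem exactly.

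The main obstacle will be the bookkeeping: roughly half a dozen different Taylor coefficients must be tracked consistently and reassembled into the compact closed form of the statement, and the precise residue cancellation between $T_0$ and $PQ$ must be verified so that the limit is actually finite. No new analytic input beyond the identities $\zeta(0)=-\tfrac12$, $\zeta'(0)=-\tfrac12\log 2\pi$ and Lemmas \ref{lem:g}, \ref{lem:hcont} is needed.
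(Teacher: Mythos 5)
Your proposal is correct and follows essentially the same route as the paper: both set $s=1/2+u$ in \eqref{eq:cont2}, observe that only the elementary main terms are singular there, and extract the constant term of the Laurent expansion as $u\to 0$. The only cosmetic difference is that the paper uses the functional equation $\zeta(2u)=2^{2u}\pi^{2u-1}\sin(\pi u)\Gamma(1-2u)\zeta(1-2u)$ together with the reflection formula to make the cancellation of the poles of $\zeta(1+2u)$ and $\Gamma(u)\zeta(2u)$ manifest before passing to the limit, whereas you verify the residue cancellation directly from $\zeta(0)=-\tfrac12$; these are equivalent.
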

\begin{proof}
 Consider equation \eqref{eq:cont2} with $s=1/2+u$.
Using the functional equation for the Riemann zeta function
\begin{equation*}
\zeta(2u)=2^{2u}\pi^{2u-1}\sin{\pi u}\Gamma(1-2u)\zeta(1-2u)
\end{equation*}
and the functional equation for $\Gamma(u)$,
the main terms in equation \eqref{eq:cont2} give the  following
\begin{multline*}
\frac{\zeta(1+2u)}{\zeta(3/2+u)}\sigma_{-1/2-u}(l^2)\int_{0}^{\infty}\omega(x)+
\frac{\zeta(1-2u)}{\zeta(3/2-u)}\sigma_{-1/2+u}(l^2)2^{1/2+3u}\pi^u\frac{\Gamma(1-2u)}{\Gamma(1-u)} \\ \biggl( \sin{\left( \frac{\pi}{4}+\frac{\pi u}{2}\right)}\int_{2l}^{\infty}\omega(x)(x^2-4l^2)^{-u}+ \cos{\left( \frac{\pi}{4}+\frac{\pi u}{2}\right)}\int_{0}^{2l}\omega(x)(4l^2-x^2)^{-u}\biggr).
\end{multline*}
Note that the expression above is holomorphic at $u=0$. Letting $u\rightarrow 0$ and using the L'H\^{o}pital rule, we prove the  theorem.

\end{proof}

\section{Applications}
This section is devoted to proving Theorems \ref{lem:convsum} and \ref{lem:exp}. In order to do this we need some preliminary results.

First, we recall the subconvexity estimate for the generalized Dirichlet $L$-functions. For $n\neq 0$ and any $\epsilon>0$ we have
\begin{equation}\label{eq:subL}
\mathscr{L}_n(1/2)\ll n^{\theta+\epsilon},
\end{equation}
where $\theta$ is a subconvexity exponent for the Dirichlet $L$-functions of real primitive characters. See \cite[Lemma~4.2]{BF1} for details.
It follows  from the result of Conrey and Iwaniec \cite{CI} that $\theta =1/6$.

Second, we need the following lemma.
\begin{lem}\label{estint}
Let $f(x), g(x)\in C^{1}[a,b]$ are real functions and $g(x)/f'(x)$ is a piecewise monotone function. Then
\begin{equation*}
\int_{a}^{b}g(x)\exp(if(x))dx\ll \max_{a<x<b}\left(\frac{g(x)}{f'(x)} \right).
\end{equation*}
\end{lem}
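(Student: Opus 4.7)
The plan is a standard first-derivative-test argument via integration by parts, arranging the integrand so that the oscillatory factor $e^{if(x)}$ appears as an exact derivative. I would set $h(x) := g(x)/f'(x)$, which is real and piecewise monotone on $[a,b]$ by hypothesis, and write
\begin{equation*}
g(x)e^{if(x)} = h(x)\,f'(x)\,e^{if(x)} = \frac{h(x)}{i}\,\frac{d}{dx}e^{if(x)}.
\end{equation*}
Integration by parts then yields
\begin{equation*}
\int_a^b g(x) e^{if(x)}\,dx = \frac{1}{i}\bigl[h(x) e^{if(x)}\bigr]_a^b - \frac{1}{i}\int_a^b h'(x) e^{if(x)}\,dx.
\end{equation*}

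The boundary term is bounded in absolute value by $|h(a)| + |h(b)| \le 2\max_{[a,b]}|h|$, which already matches the target estimate. For the remaining integral I would invoke piecewise monotonicity: partition $[a,b]$ into finitely many subintervals $[x_j,x_{j+1}]$ on each of which $h$ is monotone, so that $h'$ has constant sign on each piece. The trivial bound $|e^{if(x)}|=1$ then gives
\begin{equation*}
\left|\int_{x_j}^{x_{j+1}} h'(x) e^{if(x)}\,dx\right| \le \int_{x_j}^{x_{j+1}} |h'(x)|\,dx = |h(x_{j+1}) - h(x_j)| \le 2\max_{[a,b]}|h|,
\end{equation*}
and summing over the bounded number of pieces produces the required $O(\max_{[a,b]}|g/f'|)$ bound.

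There is no genuine obstacle in this argument; it is essentially the textbook proof of the first derivative test for oscillatory integrals. The only bookkeeping point worth recording is that the implied constant in $\ll$ depends on the number of monotonicity intervals of $g/f'$, which is harmless since in the intended applications of Theorems \ref{lem:convsum} and \ref{lem:exp} this count will be an absolute constant.
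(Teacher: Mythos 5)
Your argument is correct in substance and, unlike the paper, actually supplies a proof: the paper disposes of this lemma with the single line ``See \cite[Chapter~4.1]{T}'', i.e.\ it invokes Titchmarsh's first derivative test for exponential integrals. The proof given there proceeds differently from yours: Titchmarsh splits $e^{if}$ into $\cos f + i\sin f$, writes $g\cos f=(g/f')\cdot f'\cos f$, and applies the second mean value theorem on each monotonicity interval of $g/f'$, which requires only continuity and monotonicity of $g/f'$. Your integration-by-parts route reaches the same bound, but the step involving $\int_a^b h'(x)e^{if(x)}\,dx$ with $h=g/f'$ tacitly assumes $h$ is differentiable, which does not follow from the stated hypothesis $f,g\in C^1$ (that only makes $f'$ continuous). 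This is a cosmetic rather than a genuine gap: since $h$ is continuous and piecewise monotone it has bounded variation, so you can replace $\int h'e^{if}\,dx$ by the Riemann--Stieltjes integral $\int e^{if}\,dh$, bounded by the total variation of $h$, which is at most $2N\max|h|$ with $N$ the number of monotonicity pieces; alternatively, in the actual applications (the estimates of $h(\omega;1/2;r)$ via Corollary~\ref{cor:hyperest}) the relevant $g/f'$ is smooth, so your computation applies verbatim. Your closing remark that the implied constant depends on $N$ is exactly the right bookkeeping, and matches the dependence hidden in the paper's citation.
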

\begin{proof}
See \cite[Chapter~4.1]{T}.
\end{proof}

Third, we prove the uniform approximation formula for the special case of the Gauss hypergeometric function.
\begin{lem}Let
\begin{equation*}
U(\xi)=\frac{\tanh^{1/2}(\xi/2)}{\cosh^{2ir}(\xi/2)}F\left( \frac{1}{4}+ir,\frac{3}{4}+ir,1+2ir; \frac{1}{\cosh^2{\xi/2}}\right).
\end{equation*}
There is a $\xi_0$ such that for $r \rightarrow \infty$ uniformly for  $\xi>\xi_0$ we have
\begin{equation*}
U(\xi)=2^{2ir}\exp(-ir\xi)\left(1+\frac{1-\coth{\xi}}{16ir} \right)+O\left( \frac{1}{\exp(\xi)r^2}\right).
\end{equation*}
\end{lem}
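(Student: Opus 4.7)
The plan is to reduce via Pfaff's transformation and then apply stationary phase to an Euler integral. Since $c-b = a = \frac{1}{4}+ir$, Pfaff's identity gives
\[
U(\xi) = \sinh^{-2ir}(\xi/2)\, F\!\left(\tfrac{1}{4}+ir,\tfrac{1}{4}+ir;1+2ir;-\eta\right),\qquad \eta := \frac{1}{\sinh^{2}(\xi/2)},
\]
so that the Euler integral representation reads
\[
F = \frac{\Gamma(1+2ir)}{\Gamma(\tfrac14+ir)\Gamma(\tfrac34+ir)}\int_0^1 g(t)\,e^{ir\psi(t)}\,dt,
\]
with $\psi(t)=\log\bigl(t(1-t)/(1+\eta t)\bigr)$ and $g(t)=t^{-3/4}(1-t)^{-1/4}(1+\eta t)^{-1/4}$.

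The saddle equation $\psi'(t)=0$ reduces to $\eta t^{2}+2t-1=0$, with unique root $t_{*}=(1-e^{-\xi})/2\in(0,1)$, bounded away from the endpoints uniformly for $\xi>\xi_{0}>0$. A direct computation yields $\psi(t_{*})=2\log\bigl((1-e^{-\xi})/2\bigr)$, $\psi''(t_{*})=-8/(1-e^{-2\xi})$, $g(t_{*})=2/\sqrt{1-e^{-2\xi}}$, and $1+\eta t_{*}=\coth(\xi/2)$. The standard stationary phase expansion (see \cite[Ch.~4]{T}) with one subleading term gives
\[
\int_{0}^{1}g(t)e^{ir\psi(t)}\,dt = e^{ir\psi(t_{*})}\sqrt{\frac{2\pi}{-ir\psi''(t_{*})}}\Bigl(g(t_{*})+\frac{B(\xi)}{ir}\Bigr)+O\!\Bigl(\frac{e^{-\xi}}{r^{5/2}}\Bigr),
\]
where $B(\xi)$ is the classical combination of $g(t_{*}),g'(t_{*}),g''(t_{*}),\psi'''(t_{*}),\psi^{(4)}(t_{*})$, rational in $e^{-\xi}$. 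In parallel, Legendre duplication $\Gamma(\tfrac14+ir)\Gamma(\tfrac34+ir)=\sqrt{2\pi}\cdot 2^{-2ir}\Gamma(\tfrac12+2ir)$ followed by Stirling applied to $\Gamma(1+2ir)/\Gamma(\tfrac12+2ir)$ yields
\[
\frac{\Gamma(1+2ir)}{\Gamma(\tfrac14+ir)\Gamma(\tfrac34+ir)} = 2^{2ir}\sqrt{\frac{r}{\pi}}\,e^{i\pi/4}\Bigl(1+\frac{1}{16ir}+O(r^{-2})\Bigr).
\]

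Assembling the pieces, the phase factors collapse into $\sinh^{-2ir}(\xi/2)\cdot e^{ir\psi(t_{*})}\cdot 2^{2ir} = 2^{2ir}e^{-ir\xi}$, while the algebraic prefactors $g(t_{*})$, $\sqrt{2\pi/(-ir\psi''(t_{*}))}$, and $\sqrt{r/\pi}\,e^{i\pi/4}$ multiply to $1$ at leading order, reproducing the main term $2^{2ir}e^{-ir\xi}$. The $1/r$ correction combines $1/(16ir)$ from Stirling with the $\xi$-dependent contribution $B(\xi)/(ir\,g(t_{*}))$ from the stationary phase, which must sum to $(1-\coth\xi)/(16ir)$. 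The main obstacle is precisely this $1/r$ bookkeeping: computing $\psi'''(t_{*}),\psi^{(4)}(t_{*}),g'(t_{*}),g''(t_{*})$ explicitly and verifying the algebraic identity that produces $(1-\coth\xi)/(16ir)$ on the nose. The exponential decay $e^{-\xi}$ in the error arises because the next remainder coefficient is proportional to $1-\coth\xi = O(e^{-2\xi})$ (the saddle becomes symmetric as $\xi\to\infty$), which combined with the $\sqrt{r}$ from Stirling produces $O(e^{-\xi}/r^{2})$. No off-the-shelf quadratic transformation simplifies $F(a,a;2a+\tfrac12;\cdot)$ further, so this direct saddle-point route appears to be the cleanest.
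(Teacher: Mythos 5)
Your route is genuinely different from the paper's. The paper does not touch the Euler integral at all: it observes that $U$ satisfies the second-order equation
\[
U''(\xi)+\Bigl(r^2+\frac{1}{16\sinh^2(\xi/2)}\Bigr)U(\xi)=0,
\]
and then imports the Liouville--Green type asymptotic analysis of \cite[Lemma~2.4]{Zav}, which produces the main term, the $\frac{1-\coth\xi}{16ir}$ correction, and the $O(e^{-\xi}r^{-2})$ remainder in one stroke. Your Pfaff-plus-saddle-point plan is sound at leading order, and your bookkeeping there is correct: $c-b=a$ does give $U(\xi)=\sinh^{-2ir}(\xi/2)F(a,a;c;-\eta)$, the saddle is $t_*=(1-e^{-\xi})/2$, and the values $\psi(t_*)=2\log((1-e^{-\xi})/2)$, $\psi''(t_*)=-8/(1-e^{-2\xi})$, $g(t_*)=2/\sqrt{1-e^{-2\xi}}$, $1+\eta t_*=\coth(\xi/2)$, together with the duplication/Stirling factor, do collapse all amplitudes and phases to $2^{2ir}e^{-ir\xi}$.

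However, as written the proposal has a genuine gap, which you flag yourself: everything in the lemma beyond the main term --- the coefficient $(1-\coth\xi)/(16ir)$ and the remainder $O(e^{-\xi}r^{-2})$ --- is asserted rather than proved. ``Must sum to $(1-\coth\xi)/(16ir)$'' is not an argument; the computation of $B(\xi)$ from $g',g'',\psi''',\psi^{(4)}$ at $t_*$ is precisely where the content of the lemma lives, and until it is carried out you have only reproved the leading term. Two further points need real care even after that. First, the amplitude $g(t)=t^{-3/4}(1-t)^{-1/4}(1+\eta t)^{-1/4}$ is singular at both endpoints, so the textbook stationary-phase expansion (smooth, compactly supported amplitude) does not apply directly; you need a partition of unity and a separate nonstationary-phase argument near $t=0$ and $t=1$ exploiting $\psi'(t)\sim 1/t$ (resp.\ $-1/(1-t)$) there, and you must check that these endpoint zones contribute $O(e^{-\xi}r^{-2})$ rather than merely $O(r^{-2})$ --- otherwise the stated error, which decays in $\xi$, is lost. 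Second, the claimed $e^{-\xi}$ decay of the remainder does not follow from the observation that the \emph{next coefficient} is $O(e^{-2\xi})$; an asymptotic expansion's remainder is not bounded by its next term, and the required uniformity in $\xi$ has to be extracted, e.g.\ by differencing against the exact Beta integral at $\eta=0$. None of this is fatal --- the saddle-point route can certainly be completed --- but in its current state the proof establishes strictly less than the lemma claims, whereas the paper's ODE reduction hands the full statement to the cited lemma of Zavorotny.
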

\begin{proof}
Using the differential equation for the hypergeometric function, we find that
\begin{equation*}
U''(\xi)+\left( r^2+\frac{1}{16\sinh^2{\xi/2}}\right)U(\xi)=0.
\end{equation*}
Now the required asymptotic formula can be  obtained by following the proof of \cite[Lemma~2.4]{Zav}.
\end{proof}

\begin{cor}\label{cor:hyperest} There is an $x_0>2$ such that for $r \rightarrow \infty$ uniformly for  all $x>x_0$ we have
\begin{multline*}
F\left( \frac{1}{4}+ir,\frac{3}{4}+ir,1+2ir; \frac{4}{x^2}\right)=x^{2ir}\exp(-2ir\arcosh{x/2})\\ \times \left( \frac{x^2}{x^2-4}\right)^{1/4}
\left(1+\frac{1}{16ir }\left( 1-\frac{x^2-2}{x\sqrt{x^2-4}}\right) \right)+O\left(\frac{1}{x^2r^2} \right).
\end{multline*}
\end{cor}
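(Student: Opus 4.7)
The plan is to deduce Corollary \ref{cor:hyperest} directly from the previous lemma by the change of variables $x = 2\cosh(\xi/2)$, equivalently $\xi = 2\arcosh(x/2)$, which makes $1/\cosh^2(\xi/2) = 4/x^2$. The condition $\xi > \xi_0$ then translates to $x > x_0 := 2\cosh(\xi_0/2) > 2$, and in particular $x$ is uniformly bounded away from $2$, which keeps all the relevant expressions non-singular.

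First I would unpack the definition of $U(\xi)$ under this substitution. From $\cosh(\xi/2)=x/2$ I get $\sinh(\xi/2) = \sqrt{x^2-4}/2$, hence
\[
\tanh^{1/2}(\xi/2) = \frac{(x^2-4)^{1/4}}{x^{1/2}}, \qquad \cosh^{-2ir}(\xi/2) = \left(\frac{2}{x}\right)^{2ir}.
\]
Plugging these in and solving for the hypergeometric function yields
\[
F\left(\tfrac{1}{4}+ir, \tfrac{3}{4}+ir, 1+2ir; \tfrac{4}{x^2}\right) = \frac{x^{1/2+2ir}}{2^{2ir}(x^2-4)^{1/4}}\, U(\xi).
\]
Substituting the asymptotic formula for $U(\xi)$ from the previous lemma and using $\exp(-ir\xi) = \exp(-2ir\arcosh(x/2))$, the leading factor cleans up to $x^{2ir}\exp(-2ir\arcosh(x/2))(x^2/(x^2-4))^{1/4}$, matching the target.

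Next I would compute $\coth\xi$ in the variable $x$. The double-angle identities give $\cosh\xi = 2\cosh^2(\xi/2)-1 = (x^2-2)/2$ and $\sinh\xi = 2\sinh(\xi/2)\cosh(\xi/2) = x\sqrt{x^2-4}/2$, so
\[
\coth\xi = \frac{x^2-2}{x\sqrt{x^2-4}},
\]
which is exactly the quantity appearing in the stated corollary; thus the $1/(16ir)$-correction transfers verbatim.

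Finally I would verify the error term. The lemma's error is $O(1/(e^\xi r^2))$, so after multiplication by the prefactor $x^{1/2+2ir}/(2^{2ir}(x^2-4)^{1/4})$ it becomes $O(x^{1/2}/(e^\xi (x^2-4)^{1/4} r^2))$. Since $e^\xi = ((x+\sqrt{x^2-4})/2)^2 \asymp x^2$ uniformly for $x > x_0$, and $x^{1/2}/(x^2-4)^{1/4}$ is bounded on the same range, this collapses to $O(1/(x^2 r^2))$. There is no real obstacle here beyond bookkeeping; the only point requiring mild care is this uniformity of the error bound, and it is secured precisely by the assumption $x > x_0 > 2$ that keeps $x^2-4$ bounded away from zero.
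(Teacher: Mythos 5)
Your proposal is correct and is exactly the argument the paper intends: the corollary follows from the preceding lemma by the substitution $x=2\cosh(\xi/2)$, and your computations of $\tanh^{1/2}(\xi/2)$, $\coth\xi$, and the error term all check out. The paper omits the proof precisely because it is this routine change of variables.
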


Next, we investigate the moments of generalized Dirichlet $L$-functions in long and short intervals.
\begin{thm}\label{thm:appl1}
Let $\omega(x)$ be an infinitely differentiable function such that $\omega(x)=1$ for $T<x<X-T$, $\omega(x)=0$ for $x>X$ and $x<T/2$. For any $\epsilon>0$
\begin{multline*}
\sum_{2<n<X}\omega(n)\mathscr{L}_{n^2-4}(1/2)=\frac{1}{2\zeta(3/2)}
\int_{0}^{\infty}\omega(x)\Biggl(
\log{|x^2-4|}-\\-\frac{\pi}{2}\sgn(x-2)+3\gamma-2\frac{\zeta'(3/2)}{\zeta(3/2)}-\log{8\pi}
\Biggr)dx+O\left( \sqrt{X}\left(\frac{X}{T}\right)^{1/2+\epsilon}\right).
\end{multline*}

\end{thm}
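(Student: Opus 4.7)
\medskip
\noindent\textbf{Proof proposal.} The plan is to apply Theorem~\ref{thm:at12} with $l=1$ and extract the desired main term directly from its main term, then bound the spectral contributions $Z_C(1/2)+Z_H(1/2)+Z_D(1/2)$ by $O(\sqrt{X}(X/T)^{1/2+\epsilon})$. Since $\sigma_{-1/2}(1)=1$ and $\sum_{d\mid 1}d^{-1/2}\log d=0$, the main term of Theorem~\ref{thm:at12} collapses exactly to the stated main term of Theorem~\ref{thm:appl1}. (We may assume $T$ is large enough that $\omega$ is supported in $(2,\infty)$, so the hypothesis $\omega(2l)=0$ of Theorem~\ref{thm:at12} is trivially satisfied.) The rest of the argument is devoted to estimating the transforms $h(\omega;1/2;r)$ and $g(\omega;1/2;k)$, so that the spectral sums can be controlled via the mean-value bound \eqref{eq:estsym2} and its holomorphic analogue.

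For the Maass contribution $Z_D(1/2)=\sum_j\alpha_j L(\sym^2 u_j,1/2)h(\omega;1/2;\kappa_j)$, I would first invoke Lemma~\ref{lem:hcont}: because $\omega$ vanishes on $[0,2]$, the piece $h_2$ is identically zero and it suffices to analyze $h_1(\omega;1/2;\pm r)$. Into the integral \eqref{eq:hwsr} I insert the uniform asymptotic of Corollary~\ref{cor:hyperest}, which exhibits the dominant oscillatory factor $\exp(-2ir\arcosh(x/2))$ together with a smooth amplitude of size $(x^2/(x^2-4))^{1/4}$, while Stirling's formula reduces the Gamma-function prefactor to size $|r|^{-1/2}$. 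The phase derivative with respect to $x$ is $-2r/\sqrt{x^2-4}$, of order $r/x$ throughout the support of $\omega$. Applying Lemma~\ref{estint} (integration by parts) and separately examining the interior of $[T,X-T]$ (where $\omega\equiv 1$ and only the smooth amplitude contributes) and the two transition regions of length $T$ near $x\sim T$ and $x\sim X$ (where $\omega^{(k)}\ll T^{-k}$), I would establish
\begin{equation*}
h(\omega;1/2;r)\ll X^{1/2+\epsilon}|r|^{-3/2}\quad\text{for }1\ll |r|\ll X/T,
\end{equation*}
with rapid decay for $|r|\gg (X/T)^{1+\epsilon}$. (The decay threshold comes from comparing $\omega'\ll 1/T$ with the phase derivative $r/X$ near $x\sim X$.) Dyadic decomposition of the spectral sum and the mean-value bound \eqref{eq:estsym2} then give
\begin{equation*}
Z_D(1/2)\ll X^{1/2+\epsilon}\sum_{V\leq X/T,\text{ dyadic}} V^{2+\epsilon}\cdot V^{-3/2}\ll \sqrt{X}\,(X/T)^{1/2+\epsilon},
\end{equation*}
as required.

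The same strategy handles the holomorphic part $Z_H(1/2)$: Lemma~\ref{lem:g} plus a corresponding asymptotic for $F(k-1/4,k+1/4,2k;\cdot)$ and integration by parts yield a bound on $g(\omega;1/2;k)$ of comparable shape, which combined with Deligne's bound and the standard holomorphic mean-value for $L(\sym^2u_{j,k},1/2)$ again produces $O(\sqrt{X}(X/T)^{1/2+\epsilon})$. For the Eisenstein part $Z_C(1/2)$, the same bound on $h(\omega;1/2;r)$, together with convexity for $\zeta(1/2\pm 2ir)$ and the standard lower bound for $|\zeta(1+2ir)|$, makes the defining integral \eqref{eq:zc} converge absolutely and be of size $O(\sqrt{X}X^{\epsilon})$, which is comfortably absorbed in the error term.

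\emph{Expected main obstacle.} The technical heart of the argument is the uniform bound $h(\omega;1/2;r)\ll X^{1/2+\epsilon}|r|^{-3/2}$ with genuine rapid decay for $|r|\gtrsim X/T$. One must carefully balance three distinct scales---the overall length $X$ of the support, the smoothing scale $T$ at the endpoints, and the oscillation rate $r/x$ of the phase---and track them through the repeated integration by parts of an oscillatory integral whose amplitude is itself the output of a hypergeometric asymptotic. Any looser bound (for example, the naive $X^{1/2}|r|^{-1}$) would lose the critical power $V^{-1/2}$ in the dyadic sum and spoil the final exponent, so extracting the full strength of the phase together with the endpoint smoothness is essential.
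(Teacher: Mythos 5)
Your proposal follows essentially the same route as the paper: extract the main term from Theorem \ref{thm:at12} with $l=1$, bound $Z_D(1/2)$ and $Z_C(1/2)$ by inserting Corollary \ref{cor:hyperest} into \eqref{eq:hwsr}, applying Lemma \ref{estint}, and running a dyadic decomposition against \eqref{eq:estsym2}, then handle $Z_H(1/2)$ through $g(\omega;1/2;k)$. The only cosmetic differences are that for $|r|\gg X/T$ the paper uses the explicit second bound $h(\omega;1/2;r)\ll X^{3/2}/(Tr^{5/2})$ obtained by one integration by parts rather than asserting rapid decay, and it gets $Z_H(1/2)\ll 1$ outright from the exponential decay of $\Psi_k(4/x^2)$ in $k$ (via \cite[Theorem~6.17]{BF1}) instead of a mean-value argument.
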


\begin{proof}

Our goal is to estimate $Z_C(1/2)$, $ Z_H(1/2)$ and $Z_D(1/2)$.
On the one hand, using Corollary \ref{cor:hyperest} and Lemma \ref{estint}, we estimate the function $h_1(\omega;1/2;\pm r)$ given by equation \eqref{eq:hwsr}. Consequently,
\begin{equation*}
h(\omega;1/2;r)\ll  \frac{\sqrt{X}}{r^{3/2}}.
\end{equation*}
On the other hand, application of Corollary \ref{cor:hyperest} followed by integration  by parts and Lemma \ref{estint} yields
\begin{equation*}
h(\omega;1/2;r)\ll  \frac{X^{3/2}}{Tr^{5/2}}.
\end{equation*}
Using the last estimates, equation \eqref{eq:estsym2} and dyadic partition of unity over $t_j$, we prove that
\begin{equation*}
Z_D(1/2)\ll \sqrt{X}\left(\frac{X}{T}\right)^{1/2+\epsilon}.
\end{equation*}
Standard estimates on the Riemann zeta function yield that the same estimate is satisfied for $Z_C(1/2)$.
Using Lemma \ref{lem:g}, we obtain
\begin{equation*}
g(\omega;1/2;k)\ll \int_{0}^{\infty}\omega(x)x^{1/2}\Psi_k\left( \frac{4}{x^2}\right)dx,
\end{equation*}
where $\Psi_k(x)$ is defined by \eqref{eq: psisym2}. Then \cite[Theorem~6.17]{BF1} yields
\begin{equation*}
g(\omega;1/2;k)\ll \exp(-ck)
\end{equation*}
for some $c>0$. Therefore, $Z_H(1/2)\ll 1.$
\end{proof}
\subsection{Proof of Theorem \ref{lem:convsum}}

To remove the dependence on $\omega(n)$ in Theorem \ref{thm:appl1}, we use the subconvexity estimate \eqref{eq:subL}, obtaining the additional error term $O\left( X^{2\theta}T\right)$. Choosing $T=X^{2/3-4\theta/3}$,  we prove the asymptotic expansion \eqref{eq:app1}.
\subsection{Proof of Theorem \ref{lem:exp}}

The proof is similar to Theorem \ref{thm:appl1}. First, we estimate equation \eqref{eq:hwsr} by absolute value, obtaining
\begin{equation*}
h(\omega;1/2;r)\ll \frac{T}{\sqrt{rX}}.
\end{equation*}
Second, using Corollary \ref{cor:hyperest} and integrating by parts two times, we have
\begin{equation*}
h(\omega;1/2;r)\ll  \frac{X^{3/2}}{Tr^{5/2}}.
\end{equation*}
Consequently, we prove that
\begin{equation*}
\sum_{n>2}\mathscr{L}_{n^2-4}(1/2)\exp\left( -\left(\frac{n-X}{T} \right)^2\right)\ll
T+\sqrt{X}\left(\frac{X}{T}\right)^{1/2+\epsilon}.
\end{equation*}

\section*{Acknowledgments}
We thank the anonymous referees for many valuable comments and suggestions on the manuscript.

\nocite{}


\begin{thebibliography}{}

\bibitem{Al}
S. A. Altug, \emph{ Beyond Endoscopy via the Trace Formula - I: Poisson Summation and Contributions of Special Representations}, Compos. Math., 151--10 (2015), 1791--1820.

\bibitem{A}
J. Andersson, \emph{Summation formulae and zeta functions}, Doctoral dissertation, Stockholm University, 2006.

\bibitem{BF1}
O. Balkanova, D. Frolenkov, \emph{The mean value of symmetric square
$L$-functions}, Algebra Number theory, Vol. 12 (2018), No. 1, 35--59.



\bibitem{B}
\newblock
V.A. Bykovskii, \emph{Density theorems and the mean value of arithmetic functions on short intervals}. (Russian) Zap. Nauchn. Sem. S.-Peterburg. Otdel. Mat. Inst. Steklov. (POMI) 212 (1994), Anal. Teor. Chisel
i Teor. Funktsii. 12, 56--70, 196; translation in J. Math. Sci. (New York) 83 (1997), no. 6, 720--730.

\bibitem{BKV}
V. Bykovskii, N. Kuznetsov, and A. Vinogradov, \emph{Generalized summation formula
for inhomogeneous convolution}, Automorphic functions and their applications (Khabarovsk, 1988), Acad. Sci. USSR Inst. Appl. Math., Khabarovsk, 1990, pp. 18--63.

\bibitem{CR}
\newblock F. Chamizo and D. Raboso, \emph{On the Kuznetsov formula}, J. of Funct. Anal.,
Vol \textbf{268} (2015), 869--886.


\bibitem{C}
H. Cohen,  \emph{Sums Involving the Values at Negative Integers of L-Functions of
Quadratic Characters}, Math. Ann. 217 (1975): 271--285.

\bibitem{CI}
\newblock
J. B. Conrey and H. Iwaniec, \emph{The cubic moment of central values of automorphic
$L$-functions}, Ann. of Math. (2) 151 (2000), 1175--1216.

\bibitem{GH}
\newblock D. Goldfeld and J. Hoffstein, \emph{Eisenstein series of 1/2-integral
weight and the mean value of real Dirichlet $L$-series}, Invent. Math.,
Vol \textbf{80} ($1985$), $ 185-208$.

\bibitem{GR}
\newblock
I. S. Gradshteyn and I. M. Ryzhik, \emph{ Table of Integrals, Series, and Products}. Edited by A. Jeffrey and D. Zwillinger. Academic Press, New York, 7th edition, 2007.

\bibitem{K}
N.V. Kuznetsov, \emph{An arithmetical form of the Selberg trace formula and the distribution of norms of primitive hyperbolic classes of the modular group}, in Russian, preprint Khabarovsk, 1978.

\bibitem{K2}
N.V. Kuznetsov, \emph{Asymptotic formulas for the eigenvalues of the Laplace operator on the fundamental domain of the modular group}, in Russian, preprint Khabarovsk, 1978.

\bibitem{Ler}
\newblock
M. Lerch, \emph{Note sur la fonction $R(w,x,s)=\sum_{k=0}^{\infty}\frac{e^{2\pi ina}}{(n+c)^s}$}, Acta Math. 11 (1887), 19--24.

\bibitem{Mot}
Y. Motohashi, \emph{ The binary additive divisor problem}, Ann. Sci. \'{E}c. Norm.
Sup\'{e}r. 27 (1994), 529--572.

\bibitem{NMH}
M-H. Ng, \emph{Moments of automorphic $L$-functions}, PhD thesis, University of Hong Kong, 2016.

\bibitem{HMF}
F.W.J.~Olver , D.W.~Lozier, R.F.~Boisvert and C.W.~Clarke, \emph{{NIST} {H}andbook of {M}athematical {F}unctions}, Cambridge University Press, Cambridge $(2010)$.

\bibitem{PRR}
Y. Petridis, N. Raulf, M. Risager, \emph{Double Dirichlet series and quantum
unique ergodicity of weight one-half Eisenstein series}, Algebra Number Theory
Vol. 8 (2014), No. 7, 1539--1595.

\bibitem{SY}
K. Soundararajan, M. P. Young, \emph{The prime geodesic theorem}, J. Reine Angew. Math. 676 (2013), 105--120.

\bibitem{TX}
H. Tang and Z. Xu, \emph{Central value of the symmetric square $L$-functions related to Hecke-Maass forms},  Lith. Math. J.  56.2 (2016), 251--267.

\bibitem{T}
\newblock
E. C. Titchmarsh, \emph{The Theory of the Riemann Zeta-function}, 2nd ed., revised by D. R. Heath-Brown, Oxford University Press, Oxford, 1986.

\bibitem{Z}
\newblock
D. Zagier, \emph{ Modular forms whose Fourier coefficients involve zeta-functions of quadratic fields}. Modular
functions of one variable, VI (Proc. Second Internat. Conf., Univ. Bonn, Bonn, 1976), pp. 105--169.
Lecture Notes in Math., Vol. 627, Springer, Berlin, 1977.

\bibitem{Zav}
\newblock
N. I. Zavorotny, \emph{Automorphic functions and number theory}, part I, II (Russian),
Akad. Nauk SSSR, Dal’nevostochn. Otdel., Vladivostok 254 (1989), p. 69--124.

\end{thebibliography}
\end{document}